\documentclass[twoside,11pt]{article}
\usepackage{graphicx,amscd,amsfonts,amsmath,amsthm,amssymb,latexsym,amsfonts,color,extsizes,multirow,algorithm,algorithmic,mathtools}
\usepackage{epsfig,float,epstopdf,array,bm,cite,cases,array,multirow,relsize,graphicx,float,booktabs}
\usepackage{pgfplots}
\pgfplotsset{compat=newest}
\usepackage{geometry}
\usepackage{booktabs}
\usepackage{geometry}
\usepackage{array}
\usepackage{siunitx}
\usepackage{caption}
\usepackage{subcaption}
\usepackage[bookmarksnumbered,colorlinks,plainpages]{hyperref}
\hypersetup{citecolor=blue}

\newtheorem{theorem}{\bf Theorem}
\newtheorem{lemma}{\bf Lemma}
\newtheorem{proposition}{\bf Proposition}

\newtheorem{example}{Example}
\newtheorem{remark}{Remark}

\def\diag{\textrm{diag}}

\begin{document}
	
	\title{\bf\LARGE{
A block preconditioner with two parameters for double saddle point systems in liquid crystal director modeling
	}}
	\author{\bf Mehdi Makhdomi$^{\dag}$, Davod Khojasteh Salkuyeh$^{\ddag}$\thanks {\noindent Corresponding author.}~, Morad Ahmadnasab$^{\dag}$ 
		\\	
		{ $^{\dag}$\textit{Department of Mathematics, University of Kurdistan, P.O. Box 416, Sanandaj,  Iran}} \\
		{ $^{\ddag}$\textit{Faculty of Mathematical Sciences, University of Guilan, Rasht, Iran}} \\
		{ Emails: mehdi.ap.math@gmail.com,   khojasteh@guilan.ac.ir, m.ahmadnasab@uok.ac.ir}}
	
	\date{}
	\maketitle
	\vspace{-0.5cm}
	\noindent\hrulefill\\
	{\bf Abstract.} 
	This paper introduces a novel block preconditioner for a class of double saddle point systems arising from the finite element discretization of liquid crystal director modeling. We first establish sufficient conditions to guarantee the convergence of the underlying iterative scheme. A spectral analysis of the preconditioned matrix is then performed, leading to an effective strategy for parameter selection. The new preconditioner is used to accelerate the convergence of the flexible version of GMRES. Experimental results confirm the strong performance of the method.\\
	\noindent{\it \footnotesize \textbf{Keywords}}: {\small double saddle point, liquid crystal, preconditioner, FGMRES.}\\
	\noindent
	\noindent{\it \footnotesize AMS Subject Classification}: 65F10, 65F50, 65F08. \\
	
	\noindent\hrulefill\\
	
	\pagestyle{myheadings}\markboth{M. Makhdomi, D. K. Salkuyeh, M. Ahmadnasab}{ }
	\thispagestyle{empty}

\section{Introduction}
Liquid crystals (LCs) are a class of materials that exhibit mesophases, which are states of matter intermediate between isotropic liquids and crystalline solids. They combine key characteristics of both phases: LCs flow like conventional liquids, while their constituent molecules possess long-range orientational order analogous to that observed in crystalline solids; see~\cite{Ramage2013, Gennes1993, Stewart2004} for further details.

The orientational configuration of a liquid crystal director can be described by a function that minimizes a free energy functional of the form
\begin{align}\label{eqq1}
\mathcal{F}[u, v, w, U] = \frac{1}{2} \int_{0}^{1} \left[ (u_z^2 + v_z^2 + w_z^2) - \hat{\alpha}^2 (\hat{\beta} + w^2) U_z^2 \right] \, dz,
\end{align}
where \( u, v, w, \) and \( U \) are functions of the spatial variable \( z \in [0, 1] \), subject to appropriate boundary conditions at the endpoints. Here, \( u_z = \frac{du}{dz} \) (and similarly for \( v_z, w_z, \) and \( U_z \)). The parameters \(\hat {\alpha} \) and \( \hat{\beta} \) are given positive constants.

The free energy functional~\eqref{eqq1} is discretized using a uniform piecewise linear finite element method with mesh size \( h = 1/(l+1) \), with as \(l + 1\) the number of cells. The resulting discrete system is then minimized subject to a set of constraints imposed on the discrete solution vectors \( u, v, \) and \( w \).


By applying the method of Lagrange multipliers in conjunction with a Newton-type iterative scheme to this constrained minimization problem, one obtains a double saddle point system of the form
\begin{align}\label{org}
\tilde{\mathcal{A}} \mathbf{u} \equiv
\begin{pmatrix}
A & B^{T} & C^{T} \\
B & 0 & 0 \\
C & 0 & -D
\end{pmatrix}
\begin{pmatrix}
x_{1} \\
x_{2} \\
x_{3}
\end{pmatrix}
=
\begin{pmatrix}
f_{1} \\
f_{2} \\
f_{3}
\end{pmatrix}
= \mathbf{d},
\end{align}
where \( A \in \mathbb{R}^{n \times n} \) and \( D \in \mathbb{R}^{p \times p} \) are symmetric positive definite matrices (SPD)~\cite{Ramage2013}. The matrix \( B \in \mathbb{R}^{m \times n} \) is of full row rank, while \( C \in \mathbb{R}^{p \times n} \) is a rank-deficient matrix with \( n > m + p \). Moreover, \( BB^{T} \) is diagonal, and, in particular, \( BB^{T} = I_m \) if the unit vector constraints are satisfied exactly, with \( I_m \in \mathbb{R}^{m \times m} \) being the identity matrix.
 The vectors \( f_{1} \in \mathbb{R}^{n} \), \( f_{2} \in \mathbb{R}^{m} \), and \( f_{3} \in \mathbb{R}^{p} \) are given data, and \( \mathbf{u} = (x_{1}; x_{2}; x_{3})\) represents the unknown vector to be determined.
 
  Due to the following proposition, it follows that the system~\eqref{org} possesses a unique solution.
\begin{proposition}
	\label{prop1}
	Let \( A \) and \( D \) be SPD matrices. The block matrix \( \tilde{\mathcal{A}} \) defined in~\eqref{org} is
	 invertible if and only if \( B \) has the full row  rank~\cite{Beik2018}.
\end{proposition}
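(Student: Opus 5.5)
We prove both directions directly from the block structure of $\tilde{\mathcal{A}}$, showing that its kernel is trivial exactly when $B^T$ has trivial kernel. For the ``only if'' part, suppose $B$ is not of full row rank. Then there is a nonzero $y\in\mathbb{R}^m$ with $B^Ty=0$. The vector $(0;y;0)$ is nonzero, and applying $\tilde{\mathcal{A}}$ to it gives $(B^Ty;0;0)=0$. Hence $\tilde{\mathcal{A}}$ is singular, and invertibility forces $B$ to have full row rank.

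For the ``if'' part, assume $B$ has full row rank and let $\tilde{\mathcal{A}}(x;y;z)=0$. Written out, this means
\begin{align*}
Ax+B^Ty+C^Tz=0,\qquad Bx=0,\qquad Cx-Dz=0 .
\end{align*}
Since $D$ is SPD, the third equation gives $z=D^{-1}Cx$. Next we take the inner product of the first equation with $x$, using $x^TB^Ty=(Bx)^Ty=0$ and $x^TC^Tz=(Cx)^Tz=z^TDz$. This yields
\begin{align*}
x^TAx+z^TDz=0 .
\end{align*}
Both terms are nonnegative because $A$ and $D$ are SPD, so each must vanish, and positive definiteness then gives $x=0$ and $z=0$. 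Substituting back, the first equation reduces to $B^Ty=0$. Full row rank of $B$ means $B^T$ is injective, so $y=0$. The kernel is therefore trivial and $\tilde{\mathcal{A}}$ is invertible.

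The argument is elementary. The only step that needs care is the energy identity, specifically the sign handling: the $-D$ block in the $(3,3)$ position is what turns the cross term $x^TC^Tz$ into $+z^TDz$, so that the two quadratic forms add rather than cancel. No hypothesis on $C$, such as its rank deficiency, is needed.
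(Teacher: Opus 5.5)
Your proof is correct and complete. For necessity, the null vector $(0;y;0)$ built from a nonzero $y\in\mathcal{N}(B^T)$ suffices. For sufficiency, the identity $x^TAx+z^TDz=0$, obtained from $Bx=0$ and $Cx=Dz$, forces $x=0$ and $z=0$, and injectivity of $B^T$ then gives $y=0$.

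The paper does not prove Proposition~\ref{prop1} at all; it only attributes the result to Beik and Benzi~\cite{Beik2018}. Your argument is therefore a self-contained replacement for a citation, not a variant of a proof in the text. The citation buys brevity, while your direct kernel argument makes clear which hypotheses matter. It shows that neither the rank deficiency of $C$ nor the normalization $BB^T=I_m$ is used. It also pinpoints where definiteness of $D$ is essential. If $D$ were merely positive semidefinite, the same computation would give only $x=0$ and $Dz=0$. Invertibility would then require that $B^Ty+C^Tz=0$ with $z\in\mathcal{N}(D)$ force $y=0$ and $z=0$, and this no longer follows from full row rank of $B$. For instance, $D=0$ and $C=B$ produce the null vector $(0;y;-y)$.
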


Double saddle point problems of the form~\eqref{org} arise in numerous areas of scientific computing and engineering, including constrained optimization, mixed finite element formulations of fluid flow problems~\cite{Boffi2013, Ramage2013}, among others.

It is obvious that the coefficient matrix $\mathcal{\tilde{A}}$ of the double saddle point 
problem~\eqref{org} can be regarded as a $2\times 2$ block matrix in two ways, namely
\[
\mathcal{\hat{A}} = \left( \begin{array}{cc|c} 
A & B^{T} & C^{T} \\  
B & 0 & 0 \\ 
\hline 
C & 0 & -D
\end{array} \right),
\]
or
\[
\mathcal{\hat{A}} = \left( \begin{array}{c|cc}
A & B^{T} & C^{T} \\ 
\hline 
B & 0 & 0 \\ 
C & 0 & -D
\end{array} \right).
\]
which may both be viewed as generalized saddle point matrices.

A wide range of iterative schemes has been developed for generalized saddle point systems, including Uzawa-type iterations~\cite{Baiz2008, Zho2009}, SOR-type methods~\cite{Feng2018, Golub2001}, Hermitian and skew-Hermitian splitting (HSS) approaches~\cite{ChenF2018, ChenF2020, Liao2019, Zhang2018}, and shift-splitting (SS) techniques~\cite{Cao2014, Salkuyeh2015}, among others. These iteration methods can
not only be used as stationary iteration solvers, but they can also be served as preconditioners
for Krylov subspace methods~\cite{Cao2019, Salkuyeh2016} leading to a more efficacious class of solvers.

A variety of preconditioning strategies have been developed to efficiently address double saddle point (DSP) systems that arise in models of liquid crystal directors. In this section, we summarize several of the most notable approaches.

In \cite{BeikBenzi}, Beik and Benzi introduced an approach for solving the double saddle point system \eqref{org} using GMRES method by block triangular preconditioners having one of the following structures
\[
\mathcal{\tilde{P}}_T = 
\begin{pmatrix}
A & B^T & C^T \\
0 & -BA^{-1}B^T & 0 \\
0 & 0 & -(D + CA^{-1}C^T)
\end{pmatrix},
\]
and
\[
\mathcal{\hat{P}}_T = 
\begin{pmatrix}
A & B^T & C^T \\
0 & -BA^{-1}B^T & -BA^{-1}C^T \\
0 & 0 & -(D + CA^{-1}C^T)
\end{pmatrix}.
\]
Beik and Benzi \cite{Beik2018} introduced two preconditioners for DSP systems: the block diagonal (BD) and block triangular (BT) variants. These are defined as
\begin{align}\label{BD_BT}
\mathcal{P}_{D} &=
\begin{pmatrix}
A & B^{T} & 0 \\
\gamma B & 0 & 0 \\
0 & 0 & -D
\end{pmatrix}, &
\mathcal{P}_{T} &=
\begin{pmatrix}
A & B^{T} & C^{T} \\
\gamma B & 0 & 0 \\
0 & 0 & -D
\end{pmatrix},
\end{align}
where \( \gamma \) is a specified constant. Benzi and Beik~\cite{Benzi2019} proposed a class of Uzawa-like methods specifically designed for double saddle point systems. Their work introduced a generalized (block) Gauss-Seidel (GGS) scheme and a generalized (block) successive overrelaxation (GSOR) method, providing a comprehensive framework for solving such problems. For a detailed exposition of these methods and their theoretical properties, the reader is referred to \cite{Benzi2019}.

 In~\cite{ChenRen2023}, Chen and Ren proposed an iterative framework for solving DSP systems, accompanied by a novel preconditioning approach expressed as
\begin{align}\label{TP}
\mathcal{P} =
\begin{pmatrix}
D & -C & 0 \\
0 & S + A & B^{T} \\
0 & -B & 0
\end{pmatrix}.
\end{align}
Here, \(S\) denotes a given symmetric positive semidefinite matrix. Building upon these developments, Zhu et al.~\cite{Zhuw} recently formulated a two-parameter block triangular (TPBT) preconditioner of the form
\begin{align}\label{TPBT}
\mathcal{P}_{\mathrm{TPBT}} =
\begin{pmatrix}
A & B^{T} & C^{T} \\
B & -\alpha I & 0 \\
0 & 0 & -(D + \beta C C^{T})
\end{pmatrix},
\end{align}
with positive parameters \( \alpha \) and \( \beta \). The authors also provided efficient computational formulas to determine quasi-optimal choices of these parameters, enhancing the preconditioner’s performance. Most recently, Zhang~\cite{Zhang25} introduced the triangular upper block preconditioner, defined as
\begin{align}\label{PBUT}
\mathcal{P}_{\mathrm{BUT}} =
\begin{pmatrix}
A & B^{T} & C^{T} \\
B & -\alpha I & 0 \\
0 & 0 & -(D + \beta I)
\end{pmatrix},
\end{align}
where \( \alpha > 0 \) and \( \beta > 0 \) are given parameters. This formulation provides another efficient strategy for preconditioning DSP systems in liquid crystal modeling. Through a series of numerical experiments, Zhang demonstrated that the triangular upper block preconditioner she introduced outperforms the preconditioners presented in~\eqref{BD_BT} and~\eqref{TPBT}.

In this work, we aim to introduce a novel two-parameter preconditioner for double saddle point systems arising in liquid crystal director models, designed to further enhance the convergence rate of iterative
solvers while maintaining computational efficiency. Its formulation builds upon existing block triangular approaches, incorporating tunable parameters that can be optimized to achieve near-optimal performance across a range of problem instances.

Throughout this manuscript, we employ the following notation. For any square matrix $A$,  
$\rho(A)$ denotes its spectral radius, $\operatorname{tr}(A)$ its trace, and $\Lambda(A)$ its spectrum.  
The largest and smallest eigenvalues of a symmetric matrix $A$ are written as $\lambda_{\max}(A)$ and 
$\lambda_{\min}(A)$, respectively. For a general matrix $A$, $\|A\|_{2}$ represents the matrix 
$2$-norm, while $\|A\|_{F}$ denotes the Frobenius norm; the corresponding largest and smallest 
singular values are given by $\sigma_{\max}(A)$ and $\sigma_{\min}(A)$. We denote the null space of the matrix $A$ by $\mathcal{N}(A)$.

The notation $A \succ 0$ (resp., $A \succeq 0$) indicates that $A$ is a SPD (resp., symmetric positive semi-definite) matrix. Furthermore, for two matrices $A$ and $B$, the relation $A \succ B$ (resp., $A \succeq B$) denotes that $A - B \succ 0$ (resp., $A - B \succeq 0$).

For any complex vector $x \in \mathbb{C}^{n}$, its conjugate transpose is denoted by $x^{*}$.  
When combining vectors $x$, $y$, and $z$, we adopt the \textsc{Matlab} notation $(x;y;z)$ to 
indicate the column vector obtained by stacking them vertically, i.e., $(x^{T}, y^{T}, z^{T})^{T}$, or 
$(x^{*}, y^{*}, z^{*})^{*}$ in the complex case. Parameters equipped with the subscript “est,” such 
as $\alpha_{\text{est}}$, refer to estimated values of the corresponding quantities.

The remainder of this paper is organized as follows. Section~\ref{sec2} introduces the proposed preconditioner and analyzes the convergence of the associated iterative method. Implementation aspects of the preconditioner are also discussed. In Section~\ref{sec3}, the spectral properties of the preconditioned matrix are examined. The strategy for selecting the preconditioner parameters is presented in Section~\ref{sec4}. Section \ref{sec5} is devoted to numerical experiments that assess the performance, efficiency, and
robustness of the proposed approach. Finally, Section~\ref{sec6} offers concluding remarks.

\section{The new block preconditioner}\label{sec2}

In this section, we introduce a block preconditioner with two parameters (BPT) for solving the double saddle point systems given in equation~\eqref{org}. We start by considering an alternative splitting of the coefficient matrix $\mathcal{\tilde{A}}$, expressed as
\begin{equation}\label{split}
\mathcal{\tilde{A}} = \mathcal{Q}_{\mathrm{BPT}} - \mathcal{L}_{\mathrm{BPT}},
\end{equation}
where the matrices $ \mathcal{Q}_{\mathrm{BPT}} $ and $ \mathcal{L}_{\mathrm{BPT}} $ are defined as follows
\begin{equation}
\mathcal{Q}_{\mathrm{BPT}} = 
\begin{pmatrix}
A + \alpha B^{T}B & B^T & 0 \\
B & -\beta I & 0 \\
C & 0 & -D
\end{pmatrix}, \quad \mathcal{L}_{\mathrm{BPT}} = 
\begin{pmatrix}
\alpha B^{T}B & 0 & -C^{T} \\
0 & -\beta I & 0 \\
0 & 0 & 0
\end{pmatrix},
\end{equation}
in which $\alpha$ and $\beta$ are given positive numbers.
 It is straightforward to verify that the matrix $\mathcal{Q}_{\mathrm{BPT}}$ is nonsingular.
 The iterative scheme corresponding to this splitting \eqref{split} takes the form

\begin{align}\label{iter}
\mathbf{u}^{(k+1)} = \mathcal{G}_{\alpha,\beta}\mathbf{u}^{(k)} + \mathbf{g}, \quad k = 0, 1, 2, \ldots,
\end{align}
where $\mathbf{u}^{(0)}$ denotes the initial approximation, $\mathcal{G}_{\alpha,\beta} = \mathcal{Q}_{\mathrm{BPT}}^{-1}\mathcal{L}_{\mathrm{BPT}}$ is the iteration matrix, and $\mathbf{g} = \mathcal{Q}_{\mathrm{BPT}}^{-1}\mathbf{d}$. The iterative process defined in equation~\eqref{iter} converges to the exact solution of system~\eqref{org} for any arbitrary initial vector if and only if the spectral radius condition $\rho(\mathcal{G}_{\alpha,\beta}) < 1 $
is satisfied~\cite{SaadBook}. To analyze the convergence properties of the proposed iterative scheme for the double saddle point system, we first recall a fundamental lemma concerning the maximum modulus of the roots of a quadratic polynomial. 
\begin{lemma}[{\cite{Young1971}}]\label{l1}
	Consider the quadratic equation $y^2 - qy + p = 0$, where $p$ and $q$ are real numbers.
	Both roots of the equation have a modulus less than one if and only if  $ |p| < 1$ and $|q| < 1 + p$.
\end{lemma}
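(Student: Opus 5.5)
The statement is the classical Schur–Cohn type criterion for a real quadratic $y^2-qy+p=0$. I would prove it directly from the root structure, splitting into the cases of complex conjugate roots and real roots.

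\emph{Necessity.} Suppose both roots $y_1,y_2$ satisfy $|y_i|<1$. By Vieta, $p=y_1y_2$, so $|p|<1$ at once. For the second inequality, look at $f(y)=y^2-qy+p$ at $y=\pm1$. Since $f$ has real coefficients and both roots lie in the open unit disk, $f(1)=(1-y_1)(1-y_2)$ is strictly positive. Indeed, if the roots are conjugate this equals $|1-y_1|^2>0$. If they are real, each factor $1-y_i$ is positive. The same argument gives $f(-1)=(1+y_1)(1+y_2)>0$. Thus $1-q+p>0$ and $1+q+p>0$, which together say $|q|<1+p$.

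\emph{Sufficiency.} Assume $|p|<1$ and $|q|<1+p$, and split on the sign of the discriminant $q^2-4p$.
\begin{itemize}
\item If $q^2<4p$, the roots are complex conjugates with $|y_1|^2=y_1\bar y_1=p<1$, so both lie in the disk.
\item If $q^2\ge 4p$, the roots are real. The hypotheses give $f(1)=1-q+p>0$ and $f(-1)=1+q+p>0$. The vertex of $f$ is at $q/2$. From $|q|<1+p<2$ we get $|q/2|<1$, so the vertex lies in $(-1,1)$. The parabola opens upward, is positive at $\pm1$, and has its minimum inside $(-1,1)$. Hence both real roots lie in $(-1,1)$.
\end{itemize}

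The only step needing care is this real-root case. One must check that positivity at $\pm1$ alone does not exclude both roots lying outside $[-1,1]$ on the same side, and it is exactly the vertex-location bound $|q|<2$ that rules this out. Since the condition $|p|<1$ already forces $1+p<2$, this bound comes for free from the hypotheses.
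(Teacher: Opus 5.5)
Your proof is correct in both directions. The paper gives no proof of this lemma; it is quoted from Young's book and then used in Theorem~\ref{th1}, so there is no route of the paper's to compare against. Your argument is the standard elementary proof and supplies the self-contained justification the paper omits. It uses Vieta's formulas for $|p|<1$, the factorizations $f(\pm1)=(1\mp y_1)(1\mp y_2)$ for the condition on $q$, and a split on the sign of $q^2-4p$ for sufficiency.

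One minor overstatement is in your closing remark. The vertex bound $|q|<2$ is not the only way to exclude two real roots lying on the same side outside $[-1,1]$. The hypothesis $|p|<1$ already does so directly: two real roots both larger than $1$, or both smaller than $-1$, have product $p>1$. Since $f(\pm1)>0$ keeps $\pm1$ out of the interval between the roots, either observation closes the real-root case.
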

\begin{theorem}\label{th1}
	Let $A \in \mathbb{R}^{n \times n}$ and $D \in \mathbb{R}^{p \times p}$ be SPD matrices, $B \in \mathbb{R}^{m \times n}$ be a matrix of full row rank, and $C \in \mathbb{R}^{p \times n}$ be rank-deficient and  $\alpha > 0$ and $\beta > 0$ . Define
	\[
	\mathcal{D} = \left\{\, x \in \mathbb{R}^{n} : (x; y; z) \text{ is an eigenvector of } \mathcal{G}_{\alpha,\beta} \text{ with } \|x\|_2 = 1 \,\right\}.
	\]
	Then, the iterative method~\eqref{iter} converges to the unique solution of the system $\mathcal{\tilde{A}}\mathbf{u} = \mathbf{d}$ for any initial guess if and only if
	\begin{equation}\label{ConvCond}
	c-a < \gamma b,
	\end{equation}
	for every
	\begin{equation}\label{aandb}
	a = x^{\ast} Ax > 0, 
	\quad 
	b = x^{\ast} B^{T} B x  \ge0 \quad \text{and} \quad c=x^{\ast} C^{T}D^{-1}C x \ge 0,
	\end{equation}
	where $x \in \mathcal{D}$, $\alpha $ , $ \beta>0 $ and $ \gamma=2\alpha+\frac{1}{2\beta}$. 
\end{theorem}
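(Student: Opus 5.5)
The approach is to write the eigenvalue problem $\mathcal{G}_{\alpha,\beta}\mathbf{u}=\lambda\mathbf{u}$ in the equivalent form $\mathcal{L}_{\mathrm{BPT}}\mathbf{u}=\lambda\mathcal{Q}_{\mathrm{BPT}}\mathbf{u}$. Then I reduce it, via the $x$-component, to a scalar quadratic in $\lambda$ whose coefficients depend only on $a,b,c$, and apply Lemma~\ref{l1}. With $\mathbf{u}=(x;y;z)$ the three block rows read
\begin{align*}
\alpha B^TBx - C^Tz &= \lambda\big((A+\alpha B^TB)x + B^Ty\big),\\
-\beta y &= \lambda(Bx-\beta y),\\
0 &= \lambda(Cx - Dz).
\end{align*}
Eigenvalues $\lambda=0$ are harmless, so I assume $\lambda\neq 0$.

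First I establish two preliminary facts. (i) $\lambda\neq 1$: if $\lambda=1$, the second row gives $Bx=0$ and the third gives $Cx=Dz$. The first row then becomes $Ax+B^Ty+C^Tz=0$, so $\tilde{\mathcal{A}}\mathbf{u}=0$ with $\mathbf{u}\neq 0$, contradicting Proposition~\ref{prop1}. (ii) $x\neq 0$: if $x=0$, the third row gives $z=0$ and the second gives $(\lambda-1)\beta y=0$, hence $y=0$, which is impossible for an eigenvector. So $x$ can be normalized, which justifies the set $\mathcal{D}$.

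Next I eliminate $y$ and $z$. Since $\lambda\neq 0,1$, we have $z=D^{-1}Cx$ and $y=\lambda Bx/(\beta(\lambda-1))$. Substituting into the first row, multiplying by $x^*$, and then by $\beta(\lambda-1)$ gives
\[
\big(\beta(a+\alpha b)+b\big)\lambda^2-\beta(a+2\alpha b-c)\lambda+\beta(\alpha b-c)=0 .
\]
Put $s=\beta(a+\alpha b)+b>0$, $q=\beta(a+2\alpha b-c)/s$ and $p=\beta(\alpha b-c)/s$. Every nonzero eigenvalue is then a root of $\lambda^2-q\lambda+p=0$.

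Now I check the conditions of Lemma~\ref{l1}.
\begin{itemize}
\item $p<1$ always holds, since $s-\beta(\alpha b-c)=\beta a+b+\beta c>0$.
\item $q<1+p$ is equivalent to $b>0$.
\item $-q<1+p$ is equivalent to $2\beta(c-a-2\alpha b)<b$, i.e.\ $c-a<\gamma b$ with $\gamma=2\alpha+\frac{1}{2\beta}$.
\item $p>-1$ is equivalent to $c-a<(2\alpha+1/\beta)b$, which is implied by the previous condition.
\end{itemize}
Hence, when $b>0$, both roots lie in the unit disk if and only if \eqref{ConvCond} holds.

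The main obstacle is the case $b=0$ together with the ``only if'' direction, because the quadratic may have a spurious root. When $b=0$, the quadratic becomes $\beta a\lambda^2-\beta(a-c)\lambda-\beta c=0$, with roots $1$ and $-c/a$. The root $1$ is excluded by (i), so the genuine eigenvalue is $-c/a$, and $|{-c/a}|<1$ is exactly $c-a<0=\gamma b$. Thus \eqref{ConvCond} is again necessary and sufficient.

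For the case $b>0$ and necessity, I would argue directly on the actual eigenvalue. Suppose $\lambda$ is an eigenvalue with eigenvector $x\in\mathcal{D}$ for which $c-a\ge\gamma b$. Then $-q\ge 1+p$, so $\chi(-1)=1+q+p\le 0$ while $\chi(\lambda)\to+\infty$ as $\lambda\to-\infty$. Hence there is a real root $\le -1$. I would have to verify that this root is the eigenvalue attached to that $x$, or else simply invoke $\rho(\mathcal{G}_{\alpha,\beta})<1$ as a condition on all roots arising from $\mathcal{D}$. Combining the two cases with $\rho(\mathcal{G}_{\alpha,\beta})<1$, which is the standard criterion for convergence of \eqref{iter}, gives the theorem.
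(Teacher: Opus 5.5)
Your sufficiency direction is correct and follows the paper's route: the scalar quadratic combined with Lemma~\ref{l1}. It is in fact more careful than the paper. You notice that $q<1+p$ is equivalent to $b>0$, which the paper drops when it calls \eqref{eq14} equivalent to \eqref{eq16}. You also treat $b=0$ correctly by discarding the spurious root $1$.

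\textbf{The gap is necessity.} Lemma~\ref{l1} tells you when \emph{both} roots of the Rayleigh-quotient quadratic for $x$ lie in the unit disk. The eigenvalue attached to $x$ is only one of these roots, and the other root is in general not an eigenvalue of $\mathcal{G}_{\alpha,\beta}$. So the real root $\le -1$ you obtain from $\chi(-1)\le 0$ proves nothing. Your fallback of ``invoking $\rho(\mathcal{G}_{\alpha,\beta})<1$ as a condition on all roots arising from $\mathcal{D}$'' is not legitimate, because $\rho<1$ constrains only actual eigenvalues.

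Your dismissal of $\lambda=0$ also fails for this direction. Since $(x;0;0)$ lies in the null space of $\mathcal{L}_{\mathrm{BPT}}$ for every $x\in\mathcal{N}(B)$, the whole unit sphere of $\mathcal{N}(B)$ belongs to $\mathcal{D}$. For those $x$ you must prove $c<a$, and no quadratic is available at all. The paper's proof has the same hole: it reads Lemma~\ref{l1} as ``$|\lambda|<1$ iff $\dots$''. So the missing step cannot be borrowed from there.

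\textbf{The fix is a global, matrix-level argument.} For real $\lambda<1$ put
\[
T(\lambda)=-\lambda A-C^{T}D^{-1}C+\Bigl(\alpha(1-\lambda)+\frac{\lambda^{2}}{\beta(1-\lambda)}\Bigr)B^{T}B .
\]
The $x$-equation left after eliminating $y$ and $z$ is then exactly $T(\lambda)x=0$, and $T(-1)=A+\gamma B^{T}B-C^{T}D^{-1}C$.

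Suppose $c-a\ge\gamma b$ for some unit $x$, whether in $\mathcal{D}$ or not. Then $\lambda_{\min}(T(-1))\le 0$. On the other hand, $T(\lambda)/|\lambda|\to A+(\alpha+\frac{1}{\beta})B^{T}B\succ 0$ as $\lambda\to-\infty$. By continuity of $\lambda_{\min}$ there exist $\lambda^{\ast}\le -1$ and $w\neq 0$ with $T(\lambda^{\ast})w=0$. Then $\bigl(w;\tfrac{\lambda^{\ast}}{\beta(\lambda^{\ast}-1)}Bw;D^{-1}Cw\bigr)$ is an eigenvector of $\mathcal{G}_{\alpha,\beta}$ for $\lambda^{\ast}$, so $\rho(\mathcal{G}_{\alpha,\beta})\ge 1$.

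Together with your sufficiency argument, this shows that convergence is equivalent to $A+\gamma B^{T}B\succ C^{T}D^{-1}C$. That gives both directions of the theorem, including the $x\in\mathcal{D}$ coming from the eigenvalue $0$. One caveat: for sufficiency, $\mathcal{D}$ should be read in $\mathbb{C}^{n}$, since for a non-real eigenvalue the component $x$ is in general complex. Your use of $x^{\ast}$ already anticipates this.
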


\begin{proof}
	Let $(\lambda, \mathbf{u})$ be an eigenpair of the iteration matrix $\mathcal{G}_{\alpha,\beta}$, where $\mathbf{u} = (x; y; z)$ with 
	$x \in \mathbb{R}^n$, $y \in \mathbb{R}^m$, and $z \in \mathbb{R}^p$.  Then, the eigenvalue equation
	\begin{equation}\label{eigen1}
	\mathcal{G}_{\alpha,\beta}\mathbf{u} = \lambda \mathbf{u}
	\end{equation}
	is equivalent to
	\begin{align}
	\alpha B^{T}Bx -C^{T}z &=\lambda(Ax+\alpha B^{T}Bx+B^{T}y), \label{eq2} \\ 
	-\beta y  
	&= \lambda(Bx-\beta y), \label{eq3}\\
	\lambda(Cx-Dz) &=0. \label{eq4}
	\end{align}	
	
	For $\lambda = 0$, the result follows trivially. Hence, we consider the case $\lambda = 1$.  
	From~\eqref{eigen1}, it follows that $\mathcal{\tilde{A}}\mathbf{u} = 0$. Since $\mathcal{\tilde{A}}$ is nonsingular, we deduce that $ \mathbf{u}= 0$,  
	 which contradicts the definition of an eigenvector.
	 
	 	We claim that $x \neq 0$. Next, suppose $x = 0$. From \eqref{eq4} and the fact that $\lambda \neq 1$, we obtain $z = 0$. From \eqref{eq3}, we get $y = 0$. However, this contradicts the assumption that $\mathbf{u} = (x;\, y;\, z)$ is a nonzero eigenvector. Therefore, we must have $x \neq 0$.
	
	Thus, we may assume that $\lambda \ne 0, 1$ and $x \ne 0$. From~\eqref{eq3} and~\eqref{eq4}, we obtain
	\begin{align}
	y &= \frac{\lambda}{\beta(\lambda - 1)} Bx, \label{eq8} \\
	z &= D^{-1} Cx. \label{eq9}
	\end{align}
	Substituting these expressions into~\eqref{eq2} yields
	\begin{equation}\label{eq10}
	\alpha B^{T} Bx - C^{T} D^{-1} Cx 
	= \lambda A x + \alpha \lambda B^{T} Bx + \frac{\lambda^{2}}{\beta(\lambda - 1)} B^{T} Bx.
	\end{equation}
	
	Without loss of generality, assume that $\|x\|_{2} = 1$.  
	Premultiplying~\eqref{eq10} by $x^{\ast}$ and multiplying both sides by $\beta(\lambda - 1)$, we obtain
	\begin{equation}\label{eq12}
	\alpha \beta b (\lambda - 1) - \beta c (\lambda - 1)
	= \beta a \lambda (\lambda - 1) + \alpha \beta b \lambda (\lambda - 1) + b \lambda^{2},
	\end{equation}
	where $a$, $b$, and $c$ are defined in~\eqref{aandb}.  
	After rearranging terms, this leads to the quadratic equation
	\begin{equation}\label{eq1.2}
	\lambda^{2}
	- \frac{a\beta + 2\alpha\beta b - \beta c}{a\beta + \alpha\beta b + b}\lambda
	+ \frac{\alpha\beta b - \beta c}{a\beta + \alpha\beta b + b} = 0.
	\end{equation}
	
	By Lemma~\ref{l1}, $|\lambda| < 1$ if and only if the following inequalities hold:
	\begin{align}
	\left| \frac{\alpha\beta b - \beta c}{a\beta + \alpha\beta b + b} \right| &< 1, \label{eq13} \\
	\left| \frac{a\beta + 2\alpha\beta b - \beta c}{a\beta + \alpha\beta b + b} \right|
	&< 1 + \frac{\alpha\beta b - \beta c}{a\beta + \alpha\beta b + b}. \label{eq14}
	\end{align}
	It is straightforward to verify that inequalities~\eqref{eq13} and~\eqref{eq14} are, respectively, equivalent to
	\begin{align}\label{eq15}
	c - a < \frac{2\alpha\beta + 1}{\beta}\, b,
	\end{align}
	and
	\begin{align}\label{eq16}
	c - a < \frac{4\alpha\beta + 1}{2\beta}\, b.
	\end{align}
	Since $\alpha, \beta, a > 0$, $c \ge 0$, and $b \ge 0$, inequalities~\eqref{eq15} and~\eqref{eq16} together imply
	\[
	c - a < \min\!\left\{ \frac{2\alpha\beta + 1}{\beta}\, b,\, \frac{4\alpha\beta + 1}{2\beta}\, b \right\}
	= \left( 2\alpha + \tfrac{1}{2\beta} \right) b = \gamma b,
	\]
		 which completes the proof.
\end{proof}

\begin{lemma}[{\cite[Theorem 7.7.3]{Horn2013}}]
	Let $A, B \in \mathbb{R}^{n \times n}$ be symmetric matrices, with $A$ being positive definite and $B$ positive semidefinite.
	 Then:
	\begin{enumerate}
		\item $A \succeq B$ if and only if $\rho(A^{-1}B) \le 1$;
		\item $A \succ B$ if and only if $\rho(A^{-1}B) < 1$.
	\end{enumerate}
\end{lemma}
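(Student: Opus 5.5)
The approach is to reduce both statements to the Rayleigh-quotient characterization of generalized eigenvalues. The key point is that $A^{-1}B$ is similar to a symmetric matrix. Since $A\succ 0$, it has a symmetric positive definite square root $A^{1/2}$. The identity
\[
A^{-1}B = A^{-1/2}\bigl(A^{-1/2}BA^{-1/2}\bigr)A^{1/2}
\]
shows that $A^{-1}B$ and $M:=A^{-1/2}BA^{-1/2}$ have the same spectrum. Because $M$ is symmetric and $M\succeq 0$ (congruence preserves semidefiniteness), all its eigenvalues are real and nonnegative. Hence
\[
\rho(A^{-1}B)=\lambda_{\max}(M)=\max_{x\neq 0}\frac{x^{T}Bx}{x^{T}Ax},
\]
where the last equality follows from the substitution $x=A^{-1/2}w$ in the Rayleigh quotient of $M$.

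With this in hand, both items become a comparison of quadratic forms. For item 1, $A\succeq B$ means $x^{T}(A-B)x\ge 0$ for all $x$. Dividing by $x^{T}Ax>0$, this is equivalent to $x^{T}Bx/x^{T}Ax\le 1$ for every $x\neq 0$, i.e. to $\lambda_{\max}(M)\le 1$, which is $\rho(A^{-1}B)\le 1$. For item 2, $A\succ B$ means $x^{T}(A-B)x>0$ for all $x\ne 0$, equivalently the quotient is strictly less than one for all $x\ne0$. Because the maximum of the Rayleigh quotient is attained (by an eigenvector of $M$, or by compactness of the unit sphere), this is equivalent to $\lambda_{\max}(M)<1$, i.e. $\rho(A^{-1}B)<1$.

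An alternative presentation avoids square roots: one can argue directly with the congruence $A-B=A^{1/2}(I-M)A^{1/2}$. Then $A\succeq B$ (resp. $A\succ B$) holds iff $I-M\succeq 0$ (resp. $\succ 0$), iff every eigenvalue of $M$ is $\le 1$ (resp. $<1$). Combined with the nonnegativity of the eigenvalues of $M$, this gives the spectral radius statements.

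The only delicate point is the strict inequality in item 2, where one must use that a supremum over $x\neq 0$ is actually a maximum. Otherwise the strict pointwise bound would not transfer to $\lambda_{\max}$. Invoking attainment at an eigenvector of $M$ settles it. Semidefiniteness of $B$ is used only to identify $\rho$ with $\lambda_{\max}$ rather than with $\max|\lambda|$.
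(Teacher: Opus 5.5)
Your proof is correct. The paper gives no proof of its own and only cites Horn and Johnson, whose standard argument is your second route: the congruence $A-B=A^{1/2}(I-M)A^{1/2}$ together with the similarity of $A^{-1}B$ to the positive semidefinite matrix $M=A^{-1/2}BA^{-1/2}$. Your Rayleigh-quotient version is an equivalent repackaging of that argument. One small wording slip: that second route does not avoid square roots, since it uses $A^{1/2}$; what it avoids is the Rayleigh quotient and the attainment step needed for the strict inequality.
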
\label{lm2}

\begin{remark}\label{coro2}
	Applying Lemma~\ref{lm2}, to the linear systems of the form~\eqref{eqq1} arising from liquid crystal modeling
	 and noting that $A \succ 0$ and $C^{T}D^{-1}C \succeq 0$, condition~\eqref{ConvCond} in Theorem~\ref{th1} implies that the BPT iteration method converges if
	\[
	\rho\!\left(A^{-1}C^{T}D^{-1}C\right) < 1,
	\]
	which is equivalent to
	\[
	A \succ C^{T}D^{-1}C.
	\]
Our numerical investigations in Example~\ref{ex1} confirm that the condition 
$
A \succ C^{T}D^{-1}C
$
is satisfied for problems of small to moderate dimensions. Moreover, the numerical evidence strongly suggests that this condition continues to hold for larger-scale problems as well. On the basis of this observation, we therefore conclude that our method converges unconditionally whenever \(\alpha > 0\) and \(\beta > 0\). This claim is further supported by the numerical results reported in Table~\ref{tab:eigenvalues_AB}, which clearly demonstrate and validate the expected convergence behavior.
\end{remark}

We conclude this section by discussing the application of the preconditioner 
$\mathcal{Q}_{\mathrm{BPT}}$ within Krylov subspace methods for solving the linear system 
$\mathcal{\tilde{A}}\mathbf{u} = \mathbf{d}$. 
At each iteration of a Krylov method such as GMRES or its flexible variant FGMRES, 
one must compute the action of the inverse of the preconditioner, which requires solving a linear system of the form
$$ \mathcal{Q}_{\text{BPT}}z=r,$$ 
where $ z=(z_{1};z_{2};z_{3})$ and  $r=(r_{1};r_{2};r_{3})$ with $z_{1} , r_{ 1} \in \mathbb{R}^{n}$ , $z_{2} , r_{2} \in \mathbb{R}^{m} $ and $ z_{3} , r_{3}\in \mathbb{R}^{p} $. 
We can state Algorithm 1 for solving  $ \mathcal{Q}_{\text{BPT}}z=r$.

\medskip
\noindent\textbf{Algorithm 1.} \textit{Computation of $z = \mathcal{Q}_{\mathrm{BPT}}^{-1}r$ }\\[2mm]
1. Solve $(A+(\alpha+\frac{1}{\beta})B^{T}B) z_{1} = r_{1}+\frac{1}{\beta}B^{T}r_2$ for $z_{1}$; \\ [1.5mm]
2. Compute $z_{2} =  \frac{1}{\beta}(B z_{1}-r_2)$; \\ [1.5mm]
3. Solve $D z_{3} =Cz_1-r_3$ for $z_{3}$;\\[1.5mm]
4. Set $z=(z_1;z_2;z_{3})$.
\medskip

From Algorithm~1, we see that two linear subsystems  
with the coefficient matrices 
$
A + (\alpha + \tfrac{1}{\beta}) B^{T} B
$
and
$
D,
$
should be solved in Steps~1 and~3, respectively. 
Since both matrices are SPD, they may be solved exactly using the Cholesky factorization or approximately using the Conjugate Gradient (CG) method. 
We remark that when these subsystems are solved inexactly, the use of FGMRES is required to ensure the robustness and convergence of the outer iteration.

\section{Spectral analysis of the preconditioned matrix}\label{sec3}

\begin{theorem}\label{th2}
	Suppose that the matrices \(A\),\(B\),\(C\), and \(D\) satisfy the assumptions of Theorem~\ref{th1}. Then the preconditioned matrix
	\(\mathcal{Q}_{\mathrm{BPT}}^{-1} \tilde{\mathcal{A}}\) has an eigenvalue equal to \(1\) with algebraic multiplicity at least \(p\).
	Moreover, the remaining eigenvalues of \(\mathcal{Q}_{\mathrm{BPT}}^{-1} \tilde{\mathcal{A}}\) coincide with the eigenvalues of the matrix

	\begin{equation}\label{Gamma}
	\Gamma =
	\begin{pmatrix}
	I + \bigl( C^{T} D^{-1} C - \alpha B^{T} B \bigr) M^{-1} &
	\dfrac{1}{\beta} \bigl( C^{T} D^{-1} C - \alpha B^{T} B \bigr) M^{-1} B^{T} \\[8pt]
	B M^{-1} &
	\dfrac{1}{\beta} B M^{-1} B^{T}
	\end{pmatrix},
	\end{equation}
	where
$	M = A + \bigl( \alpha + \tfrac{1}{\beta} \bigr) B^{T} B .
$
\end{theorem}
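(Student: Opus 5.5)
Since $\mathcal{Q}_{\mathrm{BPT}}^{-1}\tilde{\mathcal{A}}$ and $\tilde{\mathcal{A}}\mathcal{Q}_{\mathrm{BPT}}^{-1}$ are similar, the plan is to work with the right-preconditioned matrix $\tilde{\mathcal{A}}\mathcal{Q}_{\mathrm{BPT}}^{-1}$. We use the splitting \eqref{split} to write $\tilde{\mathcal{A}}\mathcal{Q}_{\mathrm{BPT}}^{-1} = I - \mathcal{L}_{\mathrm{BPT}}\mathcal{Q}_{\mathrm{BPT}}^{-1}$. The third block row of $\mathcal{L}_{\mathrm{BPT}}$ is zero, so the third block row of $\mathcal{L}_{\mathrm{BPT}}\mathcal{Q}_{\mathrm{BPT}}^{-1}$ is also zero. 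Hence $\tilde{\mathcal{A}}\mathcal{Q}_{\mathrm{BPT}}^{-1}$ is block lower triangular with respect to the partition $(n+m)\,|\,p$:
\[
\tilde{\mathcal{A}}\mathcal{Q}_{\mathrm{BPT}}^{-1} = \begin{pmatrix} \Gamma' & 0 \\ \ast & I_p \end{pmatrix}.
\]
This immediately gives the eigenvalue $1$ with algebraic multiplicity at least $p$. The remaining eigenvalues are those of the leading $(n+m)\times(n+m)$ block $\Gamma'$, and it remains to show $\Gamma'=\Gamma$.

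To find $\Gamma'$, we compute the first two block columns of $\mathcal{Q}_{\mathrm{BPT}}^{-1}$ explicitly. Following Algorithm~1 with $r_3=0$, the equations $\mathcal{Q}_{\mathrm{BPT}}z=(r_1;r_2;0)$ give
\[
z_1 = M^{-1}\bigl(r_1+\tfrac{1}{\beta}B^Tr_2\bigr), \qquad z_2=\tfrac{1}{\beta}(Bz_1-r_2), \qquad z_3=D^{-1}Cz_1 .
\]
Here $M=A+(\alpha+\frac1\beta)B^TB$ is SPD, so $M^{-1}$ exists. We then apply $\tilde{\mathcal{A}}$ to $z$ and keep only the first two block rows:
\[
Az_1+B^Tz_2+C^Tz_3, \qquad Bz_1 .
\]

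The second block row is immediate. It equals $BM^{-1}r_1+\frac1\beta BM^{-1}B^Tr_2$, which matches the second block row of $\Gamma$.

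For the first block row, we substitute $z_2$ and $z_3$ to get
\[
\bigl(A+\tfrac1\beta B^TB + C^TD^{-1}C\bigr)z_1 - \tfrac1\beta B^Tr_2 .
\]
Writing $A+\frac1\beta B^TB = M-\alpha B^TB$, this becomes
\[
Mz_1 + \bigl(C^TD^{-1}C-\alpha B^TB\bigr)z_1 - \tfrac1\beta B^Tr_2 .
\]
Since $Mz_1=r_1+\frac1\beta B^Tr_2$, the two $\frac1\beta B^Tr_2$ terms cancel, leaving
\[
r_1 + \bigl(C^TD^{-1}C-\alpha B^TB\bigr)M^{-1}\bigl(r_1+\tfrac1\beta B^Tr_2\bigr),
\]
which is exactly the first block row of $\Gamma$. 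This cancellation is the only delicate bookkeeping step. Combining the two rows gives $\Gamma'=\Gamma$, and similarity transfers the conclusion to $\mathcal{Q}_{\mathrm{BPT}}^{-1}\tilde{\mathcal{A}}$.
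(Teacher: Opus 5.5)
Your proof is correct and follows the paper's route: you compute the right-preconditioned matrix $\tilde{\mathcal{A}}\mathcal{Q}_{\mathrm{BPT}}^{-1}$, read off its block-triangular structure with an $I_p$ diagonal block, and transfer the spectrum to $\mathcal{Q}_{\mathrm{BPT}}^{-1}\tilde{\mathcal{A}}$ by similarity; your explicit column computation (including the $\tfrac{1}{\beta}B^{T}r_2$ cancellation) supplies the details the paper dismisses as straightforward. One small slip: the third block row of $\mathcal{L}_{\mathrm{BPT}}\mathcal{Q}_{\mathrm{BPT}}^{-1}$ vanishes, so the matrix is block \emph{upper} triangular, $\begin{pmatrix}\Gamma & \ast\\ 0 & I_p\end{pmatrix}$ with $\ast = \bigl(-C^{T}D^{-1};\,0\bigr)$, not lower triangular as you displayed, though this does not affect the eigenvalue conclusion.
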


\begin{proof}
	A straightforward computation yields
	\begin{align}\label{Pmatrix}
	\tilde{\mathcal{A}} \mathcal{Q}_{\mathrm{BPT}}^{-1} =
	\begin{pmatrix}
	I + \left( C^{T} D^{-1} C - \alpha B^{T} B \right) M^{-1} &
	\dfrac{1}{\beta} \left( C^{T} D^{-1} C - \alpha B^{T} B \right) M^{-1} B^{T} &
	- C^{T} D^{-1} \\[6pt]
	B M^{-1} &
	\dfrac{1}{\beta} B M^{-1} B^{T} &
	0 \\[3pt]
	0 & 0 & I
	\end{pmatrix}.
	\end{align}
	Note that the matrix \(\tilde{\mathcal{A}} \mathcal{Q}_{\mathrm{BPT}}^{-1}\)
	is similar to \(\mathcal{Q}_{\mathrm{BPT}}^{-1} \tilde{\mathcal{A}}\); hence,
	they share the same eigenvalues. It follows immediately that
	\(\mathcal{Q}_{\mathrm{BPT}}^{-1} \tilde{\mathcal{A}}\) has an eigenvalue equal
	to \(1\) with algebraic multiplicity at least \(p\). The remaining eigenvalues
	are those of the \(2\times2\) block matrix \(\Gamma\) defined in~\eqref{Gamma}.  This completes the proof.
\end{proof}

\begin{theorem}\label{th3}
	Let \(A \in \mathbb{R}^{n \times n}\) and \(D \in \mathbb{R}^{p \times p}\) be SPD matrices.
	Let also \(B \in \mathbb{R}^{m \times n}\) has full row rank, and let \(C \in \mathbb{R}^{p \times n}\) be rank--deficient.
	Assume that \(\alpha > 0\) and \(\beta > 0\), 
	if the parameters \(\alpha\) and \(\beta\) satisfies
	\begin{align}\label{alphabeta}
	\frac{\alpha}{\beta} \le \frac{1}{4}\,\lambda_{\min}^{2}(A),
	\end{align}
	then all the eigenvalues of the preconditioned matrix
	\(\mathcal{Q}_{\mathrm{BPT}}^{-1}\tilde{\mathcal{A}}\) are real and positive.
	Moreover, its spectrum satisfies
	\[
	\Lambda\!\bigl(\mathcal{Q}_{\mathrm{BPT}}^{-1}\tilde{\mathcal{A}}\bigr)
	\subset
	\left(
	0,
	\;
	1+\frac{
		 \lambda_{\max}\!\left(C^{T} D^{-1} C\right)
	}{
		 \lambda_{\min}(A)
	}
	\right].
	\]
	
\end{theorem}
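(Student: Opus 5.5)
The plan is to work directly with the generalized eigenvalue problem $\tilde{\mathcal{A}}\mathbf{u}=\theta\,\mathcal{Q}_{\mathrm{BPT}}\mathbf{u}$ with $\mathbf{u}=(x;y;z)\neq 0$. This avoids going through the matrix $\Gamma$ of Theorem~\ref{th2}, which is nonsymmetric. Written blockwise, the equations are
\begin{align*}
Ax+B^{T}y+C^{T}z&=\theta\bigl((A+\alpha B^{T}B)x+B^{T}y\bigr),\\
Bx&=\theta(Bx-\beta y),\\
Cx-Dz&=\theta(Cx-Dz).
\end{align*}
The value $\theta=1$ already lies in the claimed interval, so assume $\theta\neq 1$. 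The third equation then gives $z=D^{-1}Cx$. We have $\theta\neq 0$ because $\tilde{\mathcal{A}}$ is nonsingular (Proposition~\ref{prop1}). The second equation then gives $y=\frac{\theta-1}{\theta\beta}Bx$. As in the proof of Theorem~\ref{th1}, this forces $x\neq 0$, so we may normalize $\|x\|_2=1$. Substituting into the first equation and premultiplying by $x^{\ast}$, with $a,b,c$ as in \eqref{aandb}, gives
\[
a+c+\frac{\theta-1}{\theta\beta}\,b=\theta a+\theta\alpha b+\frac{\theta-1}{\beta}\,b .
\]
Multiplying through by $\theta\beta$ yields the quadratic
\[
(\beta a+\alpha\beta b+b)\,\theta^{2}-(\beta a+\beta c+2b)\,\theta+b=0 .
\]

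\textbf{Reality.} Its discriminant simplifies to
\[
\beta^{2}(a+c)^{2}+4\beta b c-4\alpha\beta b^{2}\;\ge\;\beta^{2}a^{2}-4\alpha\beta b^{2}.
\]
This is nonnegative provided $\alpha/\beta\le a^{2}/(4b^{2})$. Now $a\ge\lambda_{\min}(A)$, and $b=\|Bx\|_2^{2}\le\|B\|_2^{2}\le 1$ because $BB^{T}$ is diagonal and equals $I_m$ in the liquid-crystal setting; in general the bound is $\|B\|_2^{2}$. Hence \eqref{alphabeta} suffices and $\theta$ is real. I expect this to be the delicate step: the hypothesis as stated involves only $\lambda_{\min}(A)$, so I must invoke the structural fact $BB^{T}=I_m$, i.e.\ $b\le 1$. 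Otherwise a factor $\|B\|_2^{4}$ would have to appear in \eqref{alphabeta}.

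\textbf{Positivity.} The leading coefficient and the linear coefficient $\beta a+\beta c+2b$ are positive, and the constant term is $b\ge 0$. If $b>0$, the roots have positive product and positive sum, so both are positive. If $b=0$, the only admissible nonzero root is $\theta=(a+c)/a=1+c/a>0$.

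\textbf{Upper bound.} I would not bound the larger root of the quadratic directly. Instead I would rearrange the scalar identity above as
\[
(\theta-1)\Bigl(a+\frac{b}{\beta}-\frac{b}{\theta\beta}\Bigr)=c-\theta\alpha b .
\]
If $\theta>1$, the bracket is at least $a$, and the right-hand side is at most $c$. Therefore $\theta-1\le c/a\le \lambda_{\max}(C^{T}D^{-1}C)/\lambda_{\min}(A)$. Combining this with the cases $\theta\le 1$ and $\theta=1$ gives the stated inclusion in $\bigl(0,\,1+\lambda_{\max}(C^{T}D^{-1}C)/\lambda_{\min}(A)\bigr]$.
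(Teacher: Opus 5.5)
Your proposal is correct and follows essentially the same route as the paper: the same quadratic from the generalized eigenproblem, the same discriminant bound relying on $BB^{T}=I_m$ (the paper also uses this, via $\sigma_{\max}(B)=1$, so your flag that it is not among the stated hypotheses is apt), and the sum/product argument for positivity. Your upper bound via the rearranged identity for $\theta>1$ is equivalent to the paper's step, which uses $b(\mu-1)^{2}\ge 0$ to obtain $\mu\le (a+c)/(a+\alpha b)\le 1+c/a$.
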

 
\begin{proof}
	Let $\mu$ be an eigenvalue of $\mathcal{Q}_{\mathrm{BPT}}^{-1} \tilde{\mathcal{A}}$ with the eigenvector 
	$\mathbf{u} = (x; y; z)$, where $x \in \mathbb{R}^{n}$, $y \in \mathbb{R}^{m}$, and $z \in \mathbb{R}^{p}$. 
Hence, $ \tilde{\mathcal{A}}\mathbf{u}=\mu\mathcal{Q}_{\mathrm{BPT}}\mathbf{u} $, which equivalent to
	\begin{align}
	A x + B^{T} y + C^{T} z &= \mu \big(A x + \alpha B^{T} B x + B^{T} y \big), \label{eqa} \\
	B x &= \mu (B x - \beta y), \label{eqb} \\
	C x - D z &= \mu (C x - D z). \label{eqc}
	\end{align}
	Since both matrices \(\mathcal{Q}_{\mathrm{BPT}}^{-1}\) and \(\tilde{\mathcal{A}}\) are nonsingular, we infer that $\mu \neq 0$. We consider two cases $ \mu=1 $ and $ \mu\neq 1 $.
	
	\medskip
	\noindent\textbf{Case 1: $\mu = 1$.}  
	Eq.~\eqref{eqb} gives $y = 0$. Substituting this into~\eqref{eqa} yields
	\begin{align}\label{bz}
	C^{T} z = \alpha B^{T} B x.
	\end{align}
	Multiplying both sides of Eq.~\eqref{bz} by $B$ and using the fact that in the liquid‐crystal setting $B B^{T} = I_m$ gives
	\begin{equation}\label{bxz}
	B(x-\frac{1}{\alpha}C^{T} z)=0.
	\end{equation}
Consequently, the eigenvector corresponding to the unit eigenvalue takes the form
\[
\mathbf{u} = \left( x;\, 0;\, z \right),
\]
for each $x, z$ satisfying $x - \frac{1}{\alpha}C^{T}z \in \mathcal{N}(B)$.

	\medskip
	\noindent\textbf{Case 2: $\mu \neq 1$.}  
	From~\eqref{eqb} and~\eqref{eqc} we obtain
	\[
	y = \frac{\mu - 1}{\beta \mu}\, Bx, 
	\qquad 
	z = D^{-1} C x,
	\]
	with $x \neq 0$ (see the proof of  Theorem~\ref{th1}). Without loss of generality assume that $\|x\|_2 = 1$. 
	Substituting these expressions into~\eqref{eqa} and premultiplying by $x^{\!*}$ gives
	\begin{equation}\label{eqz}
	(\mu - 1)^{2}\, x^{\!*} B^{T} B x 
	= - \big( \beta x^{\!*} A x + \alpha \beta x^{\!*} B^{T} B x \big) \mu^{2} 
	+ \big( \beta x^{\!*} A x + \beta x^{\!*} C^{T} D^{-1} C x \big) \mu.
	\end{equation}
	Define
	\[
	a = x^{\!*} A x> 0, \qquad 
	b = x^{\!*} B^{T} B x\ge 0, \qquad 
	c = x^{\!*} C^{T} D^{-1} C x\ge 0 .
	\]
	Then Eq.~\eqref{eqz} becomes
	\begin{equation}\label{eqw}
	q(\mu):=(\beta a + \alpha\beta b + b)\, \mu^{2} 
	- (\beta a + 2 b + \beta c)\, \mu 
	+ b = 0,
	\end{equation}
	and the discriminant of the quadratic polynomial $q$ in $\mu$ is
	\[
	\Delta
	= \beta^{2} (a + c)^{2} + 4 \beta b c - 4 \alpha \beta b^{2}.
	\]
	We first assume that $b = 0$. In this case~$ \Delta>0 $ and~\eqref{eqw} reduces to
	\[
	\beta a\, \mu^{2} - \beta(a + c) \mu = 0,
	\]
	whose nonzero root is
	\[
	\mu = 1 + \frac{c}{a}.
	\]
Hence, in this case, it is clear that
	\begin{align}\label{gfun}
	1+\frac{\lambda_{\min}(C^{T}D^{-1}C)}{\lambda_{\max}(A)}\le\mu\le 1+\frac{\lambda_{\max}(C^{T}D^{-1}C)}{\lambda_{\min}(A)}.
	\end{align}
	Since $\lambda_{\min}(C^{T}D^{-1}C)=0$ ,  the lower bound in \eqref{gfun} reduces to $1$, and we obtain
	\begin{equation}\label{Az1}
	1 \le \mu \le 1 + \frac{\lambda_{\max}(C^{T}D^{-1}C)}{\lambda_{\min}(A)}.
	\end{equation}
	\medskip
If $b \neq 0$, then $4\beta b c \ge 0$. Moreover, if
\[
 \beta^{2}(a + c)^{2} - 4\alpha\beta b^{2} \ge 0,
\]
then it follows that
\begin{equation}\label{cb}
\frac{\alpha}{\beta} \le \frac{(a + c)^{2}}{4b^{2}}.
\end{equation}
Observe that the inequality in \eqref{cb} is guaranteed if
\begin{equation}\label{sdd}
\frac{\alpha}{\beta} \le \frac{\left(\lambda_{\min}(A) + \lambda_{\min}\!\left(C^{T} D^{-1} C\right)\right)^{2}}{4\sigma_{\max}^{4}(B)},
\end{equation}
which in turn ensures that $\Delta \ge 0$. Given that $\lambda_{\min}(C^{T}D^{-1}C) = 0$ and $\sigma_{\max}(B) = 1$, the inequality in \eqref{sdd} simplifies to
	\[
	\frac{\alpha}{\beta} \le \frac{1}{4}\,\lambda_{\min}^{2}(A).
	\]
Consequently, the discriminant is non-negative and the roots of~\eqref{eqw} are real, namely,
\[
\mu_{1}
=
\frac{2b + \beta(a + c) - \sqrt{\Delta}}
{2(\beta a + \alpha\beta b + b)},\qquad 
\mu_{2}
=
\frac{2b + \beta(a + c) + \sqrt{\Delta}}
{2(\beta a + \alpha\beta b + b)},
\]
with $\mu_{1} < \mu_{2}$.
	 	Furthermore, since
	\[
	\mu_{1}+\mu_{2}
	=
	\frac{2b+\beta(a+c)}{\beta a+\alpha\beta b+b}
	>0,
	\qquad
	\mu_{1}\mu_{2}
	=
	\frac{b}{\beta a+\alpha\beta b+b}
	>0,
	\]
	both eigenvalues are positive.	To bound the eigenvalues, we rewrite \eqref{eqw} as
	\[
	(\mu - 1)^{2} b 
	= -(a\beta+\alpha\beta b)\mu^{2} 
	+ \beta(a+c)\mu .
	\]
	Since the left-hand side is non-negative,
	\[
	-(a\beta+\alpha\beta b)\mu^{2} + \beta(a+c)\mu \ge 0,
	\]
	which gives the upper bound
	\begin{align}\label{bound1}
	\mu \le \frac{a+c}{a+\alpha b}<1+\frac{c}{a}.
	\end{align}
So
	\begin{align}\label{mu3a}
	\mu
	<  
	1+\frac{ \lambda_{\max}(C^{T} D^{-1} C)}
	{ \lambda_{\min}(A)}.
	\end{align}

Combining Eq.~\eqref{Az1}  and Eq.~\eqref{mu3a} yields
\[
\Lambda\!\bigl(\mathcal{Q}_{\mathrm{BPT}}^{-1}\tilde{\mathcal{A}}\bigr)
\subset
\left(0
,\;
1+\frac{
	\lambda_{\max}\!\left(C^{T} D^{-1} C\right)
}{
	\lambda_{\min}(A)
}
\right].
\]
	which completes the proof.
\end{proof}
\begin{theorem}\label{th4}
	Suppose that  the matrices $A$, $B$, $C$ and $D$ satisfy the assumptions stated in Theorem~\ref{th1}.  
	Let $(\mu,\mathbf{u})$ be an eigenpair of the preconditioned matrix 
	$\mathcal{Q}_{\mathrm{BPT}}^{-1}\mathcal{\tilde{A}}$, where 
	$\mathbf{u} = (x; y; z)$. Then the eigenstructure may be characterized as follows:
	
	\medskip
	\noindent\textbf{Case I ($\mu = 1$).}  
The eigenvalue $\mu = 1$ has eigenvectors of the form
	\[
	\mathbf{u} = \left( x;\ 0;\ z \right),
	\]
	for each $x, z$ satisfying $x - \frac{1}{\alpha}C^{T}z \in \mathcal{N}(B)$.
\medskip

	\noindent\textbf{Case II ($\mu \neq 1$).}  
	For each $\mu \neq 1$, a corresponding eigenvector is given by
	\[
	\mathbf{u} = \left( x;\ \tfrac{\mu-1}{\beta\mu} Bx;\ D^{-1}Cx \right),
	\qquad x \neq 0.
	\]
\end{theorem}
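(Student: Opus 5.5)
The plan is to write the generalized eigenvalue problem $\tilde{\mathcal{A}}\mathbf{u}=\mu\,\mathcal{Q}_{\mathrm{BPT}}\mathbf{u}$ blockwise, exactly as in \eqref{eqa}--\eqref{eqc}, and split into the two cases $\mu=1$ and $\mu\neq1$, as was already done in the proof of Theorem~\ref{th3}. Since $\tilde{\mathcal{A}}$ and $\mathcal{Q}_{\mathrm{BPT}}$ are nonsingular, $\mu\neq0$, so dividing by $\mu$ will always be allowed.

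\textbf{Case I ($\mu=1$).} I will substitute $\mu=1$ into \eqref{eqb}. This gives $\beta y=0$, hence $y=0$. Equation \eqref{eqc} then holds identically. Equation \eqref{eqa} reduces to $C^{T}z=\alpha B^{T}Bx$, which is \eqref{bz}. I would then premultiply by $B$ and use $BB^{T}=I_m$ from the liquid crystal setting to obtain \eqref{bxz}, that is, $x-\frac{1}{\alpha}C^{T}z\in\mathcal{N}(B)$. This yields eigenvectors of the stated form $(x;0;z)$.

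Strictly, the converse direction only requires $B^{T}B(x-\frac1\alpha C^{T}z)=0$, which is equivalent to $B(\cdot)=0$ because $B^{T}$ has full column rank. However, $C^{T}z=\alpha B^{T}Bx$ is not the same as $B(x-\frac1\alpha C^Tz)=0$ unless $C^{T}z\in\mathcal{R}(B^{T})$. I expect this to be the main subtlety: the stated characterization is exactly what one obtains after multiplying by $B$. I would state it as the necessary condition obtained in the proof of Theorem~\ref{th3}, Case~1, possibly noting the range condition, and otherwise cite that computation directly.

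\textbf{Case II ($\mu\neq1$).} From \eqref{eqc}, $(1-\mu)(Cx-Dz)=0$ forces $z=D^{-1}Cx$. From \eqref{eqb}, $\mu\beta y=(\mu-1)Bx$ gives $y=\frac{\mu-1}{\beta\mu}Bx$. If $x=0$, then $y=0$ and $z=0$, which contradicts $\mathbf{u}\neq0$; this is the argument already used in Theorem~\ref{th1}. Hence $x\neq0$, and the eigenvector has the claimed form. Here $x$ solves the reduced equation obtained by substituting into \eqref{eqa}, and its Rayleigh quotient gives \eqref{eqw}, though the theorem does not require this.
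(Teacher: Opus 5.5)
Your proposal follows the paper's proof essentially step for step (blockwise form of $\tilde{\mathcal{A}}\mathbf{u}=\mu\,\mathcal{Q}_{\mathrm{BPT}}\mathbf{u}$; for $\mu=1$, $y=0$ and $C^{T}z=\alpha B^{T}Bx$ premultiplied by $B$ using $BB^{T}=I_m$; for $\mu\neq1$, $z=D^{-1}Cx$, $y=\frac{\mu-1}{\beta\mu}Bx$ and $x\neq0$), and your direct argument for $x\neq0$ is cleaner than the paper's detour through Theorem~\ref{th1}. Your Case~I caveat is correct and applies equally to the paper: the $\mu=1$ eigenvectors are exactly the nonzero $(x;0;z)$ with $C^{T}z=\alpha B^{T}Bx$, i.e.\ $B(x-\frac{1}{\alpha}C^{T}z)=0$ together with $C^{T}z\in\mathcal{R}(B^{T})$, so both arguments establish the stated condition only as a necessary one, and the literal ``for each $x,z$'' reading fails whenever $C^{T}z\notin\mathcal{R}(B^{T})$ (e.g.\ with $x=\frac{1}{\alpha}C^{T}z$).
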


\begin{proof}
	Let $\mathbf{u} = (x; y; z)$ be an eigenvector associated with the eigenvalue $\mu$, i.e.,
	\[
	\mathcal{Q}_{\mathrm{BPT}}^{-1}\mathcal{\tilde{A}} \mathbf{u}
	= \mu \mathbf{u},
	\qquad\text{equivalently}\qquad
	\mathcal{\tilde{A}}\mathbf{u}
	= \mu\,\mathcal{Q}_{\mathrm{BPT}}\mathbf{u}.
	\]
	This relation expands to
	\begin{align}\label{eqss}
	\begin{pmatrix}
	A & B^{T} & C^{T} \\
	B & 0 & 0 \\
	C & 0 & -D
	\end{pmatrix}
	\begin{pmatrix}
	x \\ y \\ z
	\end{pmatrix}
	=
	\mu\!
	\begin{pmatrix}
	A + \alpha B^{T}B & B^{T} & 0 \\
	B & -\beta I & 0 \\
	C & 0 & -D
	\end{pmatrix}
	\begin{pmatrix}
	x \\ y \\ z
	\end{pmatrix},
	\end{align}
	where $x \in \mathbb{R}^{n}$, $y \in \mathbb{R}^{m}$, and $z \in \mathbb{R}^{p}$.  
	Eq.~\eqref{eqss} is equivalent to the system
	\begin{align}
	Ax + B^{T}y + C^{T}z &= \mu\bigl(Ax + \alpha B^{T}Bx + B^{T}y\bigr),  \label{eqd3} \\
	Bx &= \mu(Bx - \beta y), \label{eqx1} \\
	(1-\mu)(Cx - Dz) &= 0. \label{eqm}
	\end{align}
	
	\medskip\noindent\textbf{Case I: $\mu = 1$.}  
	Eq.~\eqref{eqx1} reduces to $y = 0$.  
	Substituting $y = 0$ into~\eqref{eqd3} yields
	\begin{equation}\label{eqr}
	C^{T}z = \alpha B^{T}Bx.
	\end{equation}
	This is analogous to Case~1 in Theorem~\ref{th3}, and the eigenvectors take the form
	\[
		\mathbf{u} = \left( x;\, 0;\, z \right),
		\]
	corresponding to $\mu = 1$, for each $x, z$ satisfying $x - \frac{1}{\alpha}C^{T}z \in \mathcal{N}(B)$.
	
	\medskip\noindent\textbf{Case II: $\mu \neq 1$.}  
	In this case, Eqs.~\eqref{eqd3}–\eqref{eqm} become
	\begin{align}
	Ax + B^{T}y + C^{T}z &= \mu\bigl(Ax + \alpha B^{T}Bx + B^{T}y\bigr), \label{eqdd} \\
	y &= \frac{\mu - 1}{\beta\mu}\,Bx, \label{eqxx} \\
	z &= D^{-1}Cx. \label{eqmm}
	\end{align}
	Substituting~\eqref{eqxx} and~\eqref{eqmm} into~\eqref{eqdd} we obtain
	\begin{equation}\label{ert1}
	\Bigl((1-\mu)A 
	- \frac{(1+\alpha\beta)\mu^{2} - 2\mu + 1}{\beta\mu} B^{T}B
	- C^{T}D^{-1}C
	\Bigr)x = 0.
	\end{equation}
	Since $\mathcal{Q}_{\mathrm{BPT}}^{-1}\mathcal{\tilde A}$ and $\mathcal{G}_{\alpha,\beta}$ share the same eigenvectors, and by Theorem~\ref{th1}, it follows that $x \neq 0$.
	Without loss of generality, we may assume that $\|x\|_{2} = 1$.  
	Premultiplying~\eqref{ert1} by $x^{\ast}$ yields the  quadratic equation
	\begin{equation}\label{tqad}
	\mu^{2}
	- \frac{\beta a + 2b + \beta c}{\beta a + \alpha\beta b + b}\,\mu
	+ \frac{b}{\beta a + \alpha\beta b + b}
	= 0,
	\end{equation}
	where $a$, $b$, and $c$ are as defined in~\eqref{aandb}.  	
	Thus, each root $\mu$ of~\eqref{tqad} is an eigenvalue of 
	$\mathcal{Q}_{\mathrm{BPT}}^{-1}\mathcal{\tilde{A}}$, and the corresponding eigenvector is
	\[
	\mathbf{u} 
	= \left( x;\ \tfrac{\mu - 1}{\beta\mu}Bx;\ D^{-1}Cx \right).
	\]
	The proof is therefore concluded. 
\end{proof}

\section{A strategy for preconditioner parameters selection}\label{sec4}
In the implementation of the BPT preconditioner, suitable choices of the parameters $\alpha$ and $\beta$ play a crucial role in achieving efficient performance. 
Considering that the relation
\[
\Phi(\alpha, \beta)
= \|\mathcal{Q}_{\mathrm{BPT}}- \tilde{\mathcal{A}}\|_F^2
= (\alpha^2 +\beta^2)m + \| C \|_F^2 
\]
holds, it is evident that adopting relatively small values for $\alpha$ and $\beta$ generally improves the approximation quality. 

From Eq.~\eqref{Pmatrix}, it follows immediately that the preconditioned matrix 
\(\mathcal{Q}_{\mathrm{BPT}}^{-1} \tilde{\mathcal{A}}\) admits an eigenvalue equal to \(1\) with algebraic multiplicity at least \(p\). The remaining eigenvalues are precisely those of the matrix
\[
\Gamma =
\begin{pmatrix}
I + \bigl( C^{T} D^{-1} C - \alpha B^{T} B \bigr) M^{-1} &
\displaystyle \frac{1}{\beta} \bigl( C^{T} D^{-1} C - \alpha B^{T} B \bigr) M^{-1} B^{T} \\[8pt]
B M^{-1} &
\displaystyle \frac{1}{\beta} B M^{-1} B^{T}
\end{pmatrix}.
\]
The efficiency of a preconditioner is intrinsically related to the spectral properties of the preconditioned matrix $\mathcal{Q}_{\mathrm{BPT}}^{-1} \tilde{\mathcal{A}}$. 
In particular, when the eigenvalues of this matrix are clustered near unity, the convergence rate of the associated iterative method is typically enhanced. According to~\cite{BenziDe, Zhuw}, the parameters $\alpha$ and $\beta$ can be determined by solving the minimization problem
\begin{align}\label{mintr}
\min_{\alpha,\beta}\operatorname{tr}\!\left( \mathcal{Q}_{\mathrm{BPT}}^{-1} \tilde{\mathcal{A}}-I \right).
\end{align}
Based on~\eqref{mintr}, suitable values for $\alpha$ and $\beta$ can be obtained by imposing the approximate conditions
\begin{align}\label{alphaest}
I_n + \left( C^{T} D^{-1} C - \alpha B^{T} B \right) M^{-1} &\approx I_n, \\[4pt]
\frac{1}{\beta} B M^{-1} B^{T} &\approx (1 - \theta) I_m, \label{betaest}
\end{align}
where $\theta > 0$ is a small parameter. Under these approximations, the BPT preconditioner is expected to improve computational performance.
From~\eqref{alphaest}, the parameter $\alpha$ can be chosen to minimize 
\[
\operatorname{tr}\!\left( C^{T} D^{-1} C - \alpha B^{T} B \right).
\]
This minimization leads to the estimate
\begin{align}\label{alphaes1}
\alpha_{\mathrm{est}}^{\mathrm{BPT}} 
\equiv \frac{\operatorname{tr}\!\left( C^{T} \hat{D}^{-1} C \right)}
{\operatorname{tr}\!\left( B^{T} B \right)}=\frac{\operatorname{tr}\!\left( C^{T}  \hat{D}^{-1} C \right)}
{\operatorname{tr}\!\left( BB^{T}  \right)}=\frac{\operatorname{tr}\!\left( C^{T}  \hat{D}^{-1} C \right)}
{m},
\end{align}
where $\hat{D}=\diag(D)$.

Similarly, from~\eqref{betaest} and applying the Sherman--Morrison--Woodbury formula~\cite{Horn2013}, we obtain
\[
B A^{-1} B^{T} \approx 
\beta (1 - \theta) 
\left( I_m + \bigl( \alpha + \tfrac{1}{\beta} \bigr) B A^{-1} B^{T} \right).
\]
Simplifying this relation the following estimation yields
\begin{align}\label{betaes1}
\beta_{\mathrm{est}}^{\mathrm{BPT}}
\equiv
\frac{ \theta\, \operatorname{tr}\!\left( B D_A^{-1} B^{T} \right) }
{ (1 - \theta) \left( m + \alpha\, \operatorname{tr}\!\left( B D_A^{-1} B^{T} \right) \right) },
\end{align}
where $D_A=\diag(A)$.


\section{Numerical Experiments}\label{sec5}

In this section, we examine the practical efficiency of the proposed BPT preconditioner for solving double saddle point systems through a collection of numerical tests. All computations were carried out in \textsc{Matlab}~(R2020a) on a Windows~10 platform equipped with an Intel Core~i5 processor running at 2.6~GHz and 8~GB of RAM.
 
 In all the tests, the initial vector is a zero vector and
 the right-hand side vector $ \mathbf{d} $ is chosen such that the exact solution of~\eqref{org} is the vector that
 has all its components equal to one.

We compare the proposed BPT preconditioner with the TPBT~\cite{Zhuw} and BUT~\cite{Zhang25}, preconditioners, defined in~\eqref{TPBT} and~\eqref{PBUT}, respectively. To ensure that the tested preconditioners are implemented efficiently and that the corresponding preconditioned FGMRES solvers achieve rapid convergence, it is crucial to select their parameters appropriately. For the TPBT and BUT preconditioners, the parameters $\alpha_{\text{est}}$ and $\beta_{\text{est}}$ are chosen according to the algebraic estimation strategy derived in Section~\ref{sec4} as follows.
\begin{eqnarray*}
\alpha_{\mathrm{est}}^{\mathrm{TPBT}} &=& \frac{\operatorname{tr}\left(B D_A^{-1} B^T\right)}{m}, \hspace{1.6cm}
\quad \beta_{\mathrm{est}}^{\mathrm{TPBT}} = \operatorname{tr}(D_A^{-1}), \\
\alpha_{\mathrm{est}}^{\mathrm{BUT}} &=& \frac{\varepsilon}{m(1 - \varepsilon)} \operatorname{tr}\left(B D_A^{-1} B^T\right), 
\quad \beta_{\mathrm{est}}^{\mathrm{BUT}}= \frac{1}{m} \operatorname{tr}(CD_{S_{A}}^{-1}C^T).
\end{eqnarray*}
Here, $D_A$ and $D_{S_A}$ represent diagonal matrices formed by extracting the main diagonal entries of $A$ and $S_A=A+\frac{1}{\beta}B^{T}B$, respectively, while $\epsilon$ denotes a small positive parameter. In the numerical experiments presented in Tables~\ref{tab.1} and~\ref{tab.2}, the parameter $\theta$ of the BPT method was set to $0.001$, while for the BUT method, the parameter $\varepsilon$ was chosen as $0.0001$, in accordance with~\cite{Zhang25}.

 The preconditioned  flexible GMRES (FGMRES) method is employed, with a maximum of 1000 iterations allowed and a relative residual tolerance of $10^{-10}$. The iteration is terminated when
\[
\text{RES}=\frac{\|\mathbf{d} - \mathcal{\tilde{A}}\mathbf{u}_k\|_2}{\|\mathbf{d}\|_2} \leq 10^{-10}.
\]
To assess the accuracy of the computed solutions, we report the relative error (ERR) of the numerical approximations, defined as
\[
\text{ERR} = \frac{\|\mathbf{u}_k - \mathbf{u}_*\|_2}{\|\mathbf{u}_*\|_2},
\]
where $\mathbf{u}_k$ denotes the approximate solution at the $k$-th iteration and $\mathbf{u}_*$ represents the exact solution of the linear system.

The linear subsystems involved in the flexible
GMRES method were solved inexactly by the CG method, where the preconditioners
were chosen as the sparse Cholesky factorizations with AMD reordering. The inner stopping tolerance was set to $10^{-3}$, and the maximum number of iterations for CG was limited to 100.
 All reported CPU times (in seconds) and iteration counts correspond to the average over three independent runs.
 
\begin{example}\label{ex1}\rm
	 (Off-state problems)~We consider the energy functional \eqref{eqq1} with $\hat{\alpha}=0.5\alpha_{c}$ and $\hat{\beta}=0.5$, where $\alpha_{c}\approx 2.721$ is known as the critical switching value. Moreover, we choose the mesh step-size $h=1/(l+1)$ for the uniform piecewise linear finite element scheme, where $l+1$ is the mesh number along each direction. Furthermore, we impose the unit constraint conditions $u_{j}^{2}+v_{j}^{2}+w_{j}^{2}=1$ for $j=1,2,\ldots,l$, where $u_{j}$, $v_{j}$, $w_{j}$ represent the components of solution vectors $u$, $v$, $w$, respectively. Then, the double saddle point coefficient matrix $\mathcal{\tilde{A}}$ of \eqref{org} is obtained with $n=3l$ and $m=p=l$.
\end{example}
 
 \begin{table}[htbp]
 	\centering
 	\caption{Spectral properties of the matrix \(A-C^{T}D^{-1}C\) for different sizes.}
 	\label{tab:eigenvalues_AB}
 	\begin{tabular}{cccc}
 		\toprule
 		Matrix & Problem Size &  \(\boldsymbol{\lambda_{\min}}\) & \(\boldsymbol{\lambda_{\max}}\) \\
 		\midrule
 		 & \(635\)  & \(6.44 \times 10^{-2}\) & \(5.11 \times 10^2\)  \\
 		 & \(1275\)  & \(3.21 \times 10^{-2}\) & \(1.02 \times 10^3\)  \\
 		\(A-C^{T}D^{-1}C\) & \(2555\) & \(1.60 \times 10^{-2}\) & \(2.04 \times 10^3\)  \\
 		 & \(5115\) & \(8.04 \times 10^{-3}\) & \(4.09 \times 10^3\)  \\
 		 & \(10235\) & \(4.02 \times 10^{-3}\) & \(8.19 \times 10^3\)  \\
 		\bottomrule
 	\end{tabular}
 \end{table}

\begin{figure}[htbp]
	\centering
	
	\begin{subfigure}{0.48\textwidth}
		\centering
		\includegraphics[width=\textwidth]{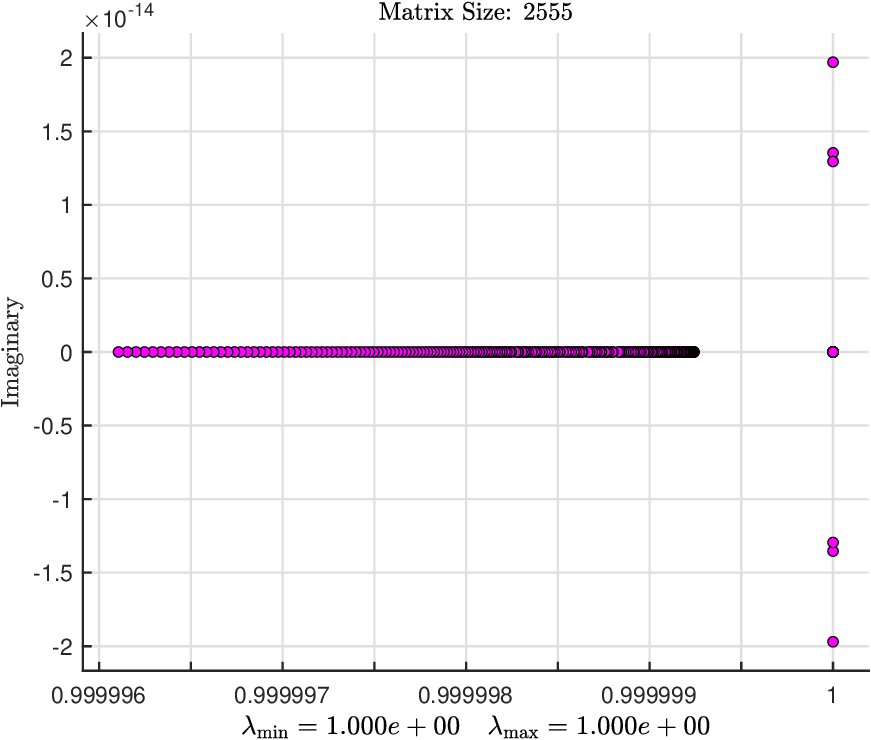}  
		\label{fig:left}
	\end{subfigure}
	\hfill 
	\begin{subfigure}{0.48\textwidth}
		\centering
		\includegraphics[width=\textwidth]{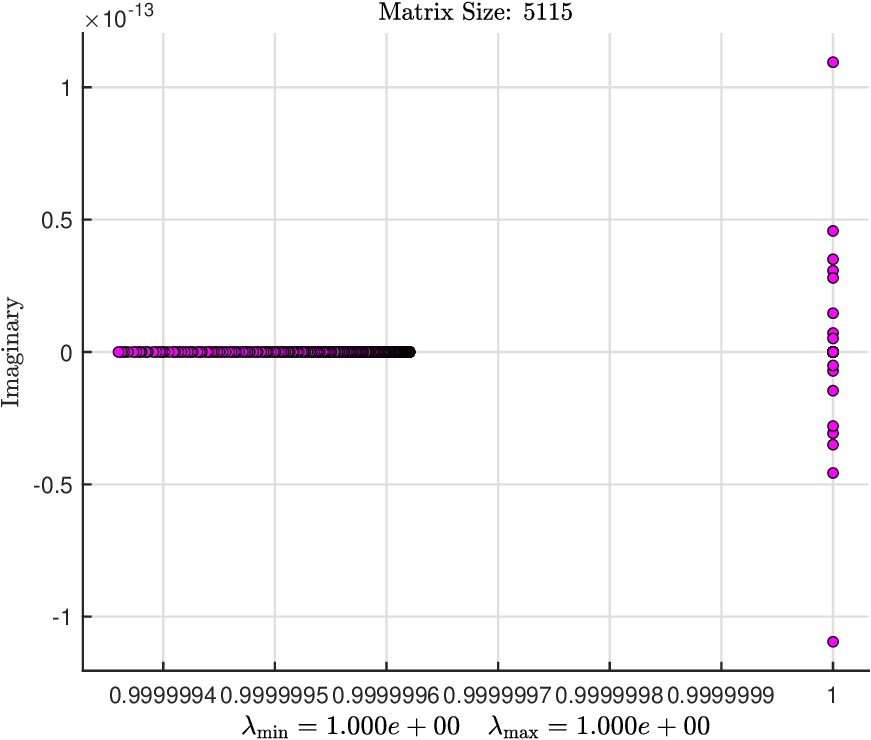} 
		\label{fig:right}
	\end{subfigure}
	
	\caption{Distribution of eigenvalues of the preconditioned matrix $ \mathcal{Q}_{\mathrm{BPT}}^{-1}\mathcal{\tilde{A}} $ based on the parameters of Theorem~\ref{th3}, for Example~\ref{ex1}.}
	\label{fig:main}
\end{figure}

The numerical results in Table~\ref{tab:eigenvalues_AB} confirm the theoretical 
findings of Theorem~\ref{th1} and the subsequent remark, 
demonstrating that our method converges unconditionally for any $\alpha > 0$ 
and $\beta > 0$.

To validate the theoretical results of Theorem~\ref{th3}, Figure~\ref{fig:main} 
is illustrated. The parameters are selected based on the following relation from this theorem
\begin{align}\label{alphabeta1}
\frac{\alpha}{\beta} \le \frac{1}{4}\,\lambda_{\min}^{2}(A),
\end{align}
specifically, $\alpha = 1.4862 \times 10^{-3}$ and $\beta = 1.4862 \times 10^{4}$ 
are chosen to satisfy Equation~\eqref{alphabeta1}. The right subfigure corresponds 
to Example~\ref{ex1} with a size of 5115, where $\lambda_{\min}^2(A)/4 = 2.3224 \times 10^{-5}$, 
while the left subfigure corresponds to a size of 2555, yielding 
$\lambda_{\min}^2(A)/4 = 9.2896 \times 10^{-5}$. It is readily apparent that choosing 
the parameters according to Theorem~\ref{th3} effectively clusters the eigenvalues 
around the point 1.

For a test problem of size 2555, the eigenvalue distributions of the various preconditioned matrices, along with their corresponding estimated parameters, are depicted in Figure~\ref{fig2}. It is evident that the preconditioned matrix $\mathcal{Q}_{\text{BPT}}^{-1}\mathcal{\tilde{A}}$ yields a much tighter and more favorable clustering behavior. It is necessary to mention that the imaginary parts of the preconditioned matrix for the BPT preconditioner are due to the round-off error.

Figure~\ref{fig:fgmres_plots} examines the robustness of the proposed preconditioner with
respect to the parameter $\hat{\alpha}$, which, according to the discussion
in~\cite{Ramage2013}, is the main control parameter. The parameter $\hat{\beta}$ is
kept fixed at $\hat{\beta} = 0.5$. The values of $\hat{\alpha}$ used in these tests
are
\[
\hat{\alpha}(k) = 0.5 \, k \, \alpha_c, \qquad k = 1,\dots,8,
\]
with $\alpha_c = 2.721$. These values are physically meaningful and
match those employed in the reference tests of~\cite{Ramage2013}. The results
for a medium-sized problem and a large-sized problem, with
$n = 20,475$ and $n = 163,835$ degrees of freedom, respectively, are
shown in Figure~\ref{fig:fgmres_plots}; the trends for the other problem sizes are similar. The same conclusion holds for even larger problem sizes as well.
From these results, it can be observed that the preconditioner performance is essentially independent of $\hat{\alpha}$.
 
 \begin{figure}[!htp]
 	\centering
 	\includegraphics[scale=0.90]{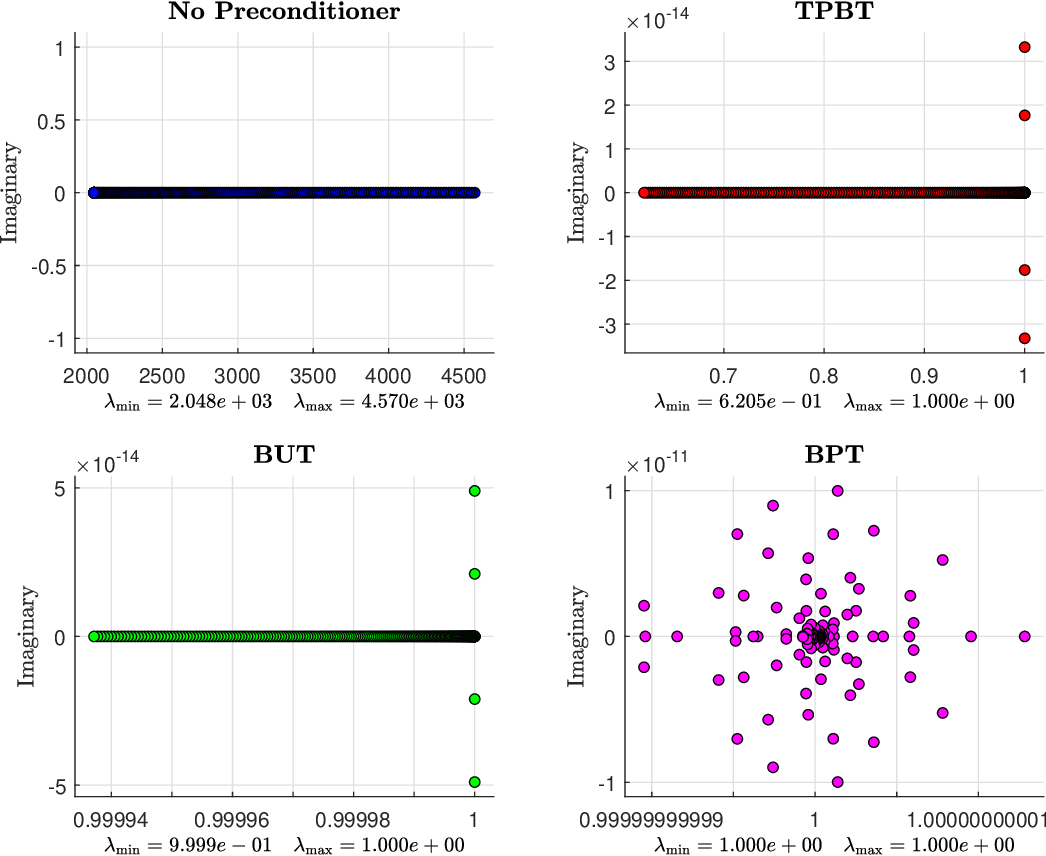}
 	\caption{Eigenvalue distribution of the preconditioned matrix for the estimated parameters, for Example~\ref{ex1}.}
 	\label{fig2}
 \end{figure}

\begin{figure}[htbp]
	\centering
	
	\begin{tikzpicture}
	\begin{axis}[
	width=0.48\textwidth,
	title={Matrix Size: 20475},
	xlabel={$k$},
	ylabel={Number of Iterations},
	xmin=1, xmax=8,
	ymin=0, ymax=4,
	xtick={1,2,3,4,5,6,7,8},
	ytick={0,1,2,3,4},
	grid=major,
	grid style={dashed,gray!30}
	]
	\addplot[
	color=blue,
	mark=*,
	thick
	]
	coordinates {
		(1,2)(2,2)(3,2)(4,2)(5,2)(6,2)(7,2)(8,2)
	};
	\end{axis}
	\end{tikzpicture}
	\hfill 
	\begin{tikzpicture}
	\begin{axis}[
	width=0.48\textwidth,
	title={Matrix Size: 163835},
	xlabel={$k$},
	xmin=1, xmax=8,
	ymin=0, ymax=4,
	xtick={1,2,3,4,5,6,7,8},
	ytick={0,1,2,3,4},
	grid=major,
	grid style={dashed,gray!30}
	]
	\addplot[
	color=red,
	mark=square*,
	thick
	]
	coordinates {
		(1,2)(2,2)(3,2)(4,2)(5,2)(6,2)(7,2)(8,2)
	};
	\end{axis}
	\end{tikzpicture}
	
	\caption{Number of iterations for FGMRES vs $k$ for $\alpha_{\mathrm{est}}$, $\beta_{\mathrm{est}}$, $\theta=0.001$}
	\label{fig:fgmres_plots}
\end{figure}

In Table~\ref{tab.11}, we report the numerical results for the  PBT preconditioner, for Example~\ref{ex1}. The parameters $\alpha_{\mathrm{est}}$ and $\beta_{\mathrm{est}}$ are chosen according to relations~\eqref{alphaes1} and \eqref{betaes1}, respectively.
To demonstrate that the proposed method exhibits little sensitivity to the parameter $\theta$ associated with relation \eqref{betaes1}, we report the results for $\theta = 0.1$, $0.01$, and $0.0001$; nevertheless, for the main comparative experiments, the BPT method was consistently employed with $\theta = 0.001$, as discussed earlier.

The numerical results for Example~\ref{ex1} are presented in Table~\ref{tab.1}. These results are obtained with $\alpha_{\mathrm{est}}$, $\beta_{\mathrm{est}}$, and with $\theta = 0.001$.
 We observe that the BPT preconditioner consistently outperforms the TPBT and BUT preconditioners in terms of both iteration count and CPU time across all tested mesh sizes. In particular, the FGMRES method with the BPT preconditioner requires significantly fewer iterations than the other methods, and the iteration counts remain constant.
Additionally, the proposed preconditioning strategy provides solutions of markedly higher fidelity than those obtained via the alternative preconditioners. A striking observation is the progressive loss of accuracy in the competing approaches as the mesh is refined and the system size grows, whereas the BPT preconditioned solver preserves its precision throughout. This resilience to scale-up further establishes the superior robustness and computational efficacy of the method herein advanced.
\begin{table}[htbp]
	\centering
	\caption{FGMRES performance with different values of $\theta$ for Example~\ref{ex1}.}
	\label{tab.11}
	\setlength{\tabcolsep}{5pt}
	\renewcommand{\arraystretch}{1.1}
	\begin{tabular}{lcccccccccccc}
		\toprule
		\multirow{2}{*}{Size} &
		\multicolumn{4}{c}{$ \theta=0.1 $} & 
		\multicolumn{4}{c}{$ \theta=0.01 $} & 
		\multicolumn{4}{c}{$ \theta=0.0001 $} \\
		\cmidrule(lr){2-5}\cmidrule(lr){6-9}\cmidrule(lr){10-13}
		& IT & CPU & RES & ERR & IT & CPU & RES & ERR & IT & CPU & RES & ERR \\
		\midrule
		
		20475 & 2 & 0.09 & 8.1e-11 & 5.1e-05 & 2 & 0.09 & 1.7e-12 & 5.5e-07 & 2 & 0.09 &3.8e-13 & 2.4e-09 \\
		40955 & 2 & 0.13 & 2.0e-11 & 3.6e-05 & 2 & 0.11 & 4.9e-13 & 3.9e-07 & 2 & 0.11 & 1.0e-13 & 3.9e-09 \\
		81915 & 2 & 0.17 & 4.9e-12 & 2.6e-05 & 2 & 0.16 & 8.9e-14 & 2.8e-07 & 2 & 0.16 & 2.5e-13 & 2.8e-08 \\
		163835 &2 & 0.27 & 1.2e-12 & 1.8e-05 & 2 & 0.25 & 4.3e-14 & 1.9e-07 & 2 & 0.26 & 2.7e-13 & 5.9e-08 \\
		327675 & 2 & 0.52 &3.1e-13 & 1.3e-05 & 2 & 0.47 & 5.0e-14 & 1.4e-07 & 2 & 0.49 & 3.8e-13 & 2.6e-07 \\
		655355 & 2 & 0.97 & 1.1e-13 & 9.1e-06 &2& 0.96 & 7.5e-14 & 1.0e-07 & 2 & 0.96 & 5.7e-13 & 1.5e-06 \\
		1310715 & 2 & 1.83 & 1.1e-13 & 6.4e-06 &2 & 1.80 & 1.0e-13 & 1.5e-07 & 2 & 1.80 & 9.5e-12 & 4.4e-06 \\
		2621435 & 2 & 3.47 &1.5e-13 & 4.6e-06 &2 & 3.48 & 1.5e-13 & 5.3e-07 & 2 & 4.46 & 2.7e-11 & 1.3e-04 \\
		\bottomrule
	\end{tabular}
\end{table}

\begin{table}[htbp]
	\centering
	\caption{Comparison of FGMRES performance with different preconditioners for Example~\ref{ex1}.}
	\label{tab.1}
	\setlength{\tabcolsep}{5pt}
	\renewcommand{\arraystretch}{1.1}
	\begin{tabular}{lcccccccccccc}
		\toprule
		\multirow{2}{*}{Size} &
		\multicolumn{4}{c}{TPBT} & 
		\multicolumn{4}{c}{BUT} & 
		\multicolumn{4}{c}{BPT} \\
		\cmidrule(lr){2-5}\cmidrule(lr){6-9}\cmidrule(lr){10-13}
		& IT & CPU & RES & ERR & IT & CPU & RES & ERR & IT & CPU & RES & ERR \\
		\midrule

		  20475& 5 & 0.14 & 9.1e-11 & 7.3e-05 & 4 & 0.12 & 4.5e-12 & 4.5e-09 & 2 & 0.09 & 4.5e-13 & 6.9e-09 \\
		 40955 & 5 & 0.19 & 2.3e-11 & 5.2e-05 & 4 & 0.15 & 1.6e-12 & 4.6e-09 & 2 & 0.10 & 6.5e-14 & 4.5e-09 \\
			81915 & 4 & 0.25 & 2.7e-11 & 1.6e-04 & 4 & 0.23 & 5.6e-13 & 4.5e-09 & 2 & 0.16 & 3.5e-14 & 3.1e-09 \\
	 163835 & 3 & 0.36 & 5.5e-11 & 2.1e-04 & 3 & 0.33 & 6.8e-11 & 1.7e-06 & 2 & 0.26 & 4.5e-14 & 2.9e-09 \\
		327675 & 3 & 0.72 &1.9e-11 & 1.5e-04 & 3 & 0.63 & 5.1e-11 & 4.9e-06 & 2 & 0.50 & 7.8e-14 & 5.5e-08 \\
		655355 & 3 & 1.38 & 6.7e-12 & 1.1e-04 & 3 & 1.21 & 6.3e-11 & 1.3e-05 & 2 & 0.96 & 8.4e-14 & 1.2e-07 \\
		1310715 & 3 & 2.64 & 2.4e-12 & 7.5e-05 & 3 & 2.35 & 9.1e-12 & 7.2e-06 & 2 & 1.83 & 1.2e-13 & 3.8e-07 \\
		2621435 & 3 & 5.11 &8.5e-13 & 5.3e-05 & 4 & 5.51 & 1.6e-12 & 7.2e-06 & 2 & 3.46 & 2.2e-13 & 2.4e-07 \\
		\bottomrule
	\end{tabular}
\end{table}
\begin{example}\label{ex2}\rm
	(On-state problems) We consider the energy functional \eqref{eqq1} with $\hat{\alpha}=1.5\alpha_{c}$ and $\hat{\beta}=0.5$, where $\alpha_c\approx 2.721$. Similarly, using uniform piecewise linear finite element scheme with the mesh step size $h=1/(l+1)$ and imposing the constraint conditions $u_{j}^{2}+v_{j}^{2}+w_{j}^{2}=1$ for $j=1,\,2,\,\ldots,\,l$, respectively, the double saddle point coefficient matrix $\mathcal{\tilde{A}}$ of \eqref{org} is obtained with $n=3l$ and $m=p=l$.
\end{example}

Figure~\ref{fig3} illustrates the eigenvalue distributions of the preconditioned matrices, along with their estimated parameters, for a test problem of size $2555$. As is clearly observed, the preconditioned matrix $\mathcal{Q}_{\text{BPT}}^{-1}\mathcal{\tilde{A}}$ exhibits a significantly more compact and favorable eigenvalue clustering.

The computational results  are summarized in Table~\ref{tab.2}. It is evident that the BPT preconditioner consistently surpasses both the TPBT and BUT preconditioners with respect to iteration count and CPU time across all mesh sizes under consideration. Notably, the FGMRES solver equipped with the BPT preconditioner requires substantially fewer iterations than its counterparts, while maintaining nearly constant iteration counts independent of problem scale.

Furthermore, the proposed BPT preconditioning framework delivers solutions of significantly higher accuracy compared to the alternative methods. In contrast to the TPBT and BUT approaches, which exhibit a clear degradation in solution precision as the mesh is refined and the system dimension increases, the BPT preconditioned solver sustains excellent accuracy throughout. This exceptional scalability and robustness firmly establish the superior performance and reliability of the BPT preconditioner.

\begin{figure}[!htp]
	\centering
	\includegraphics[scale=0.90]{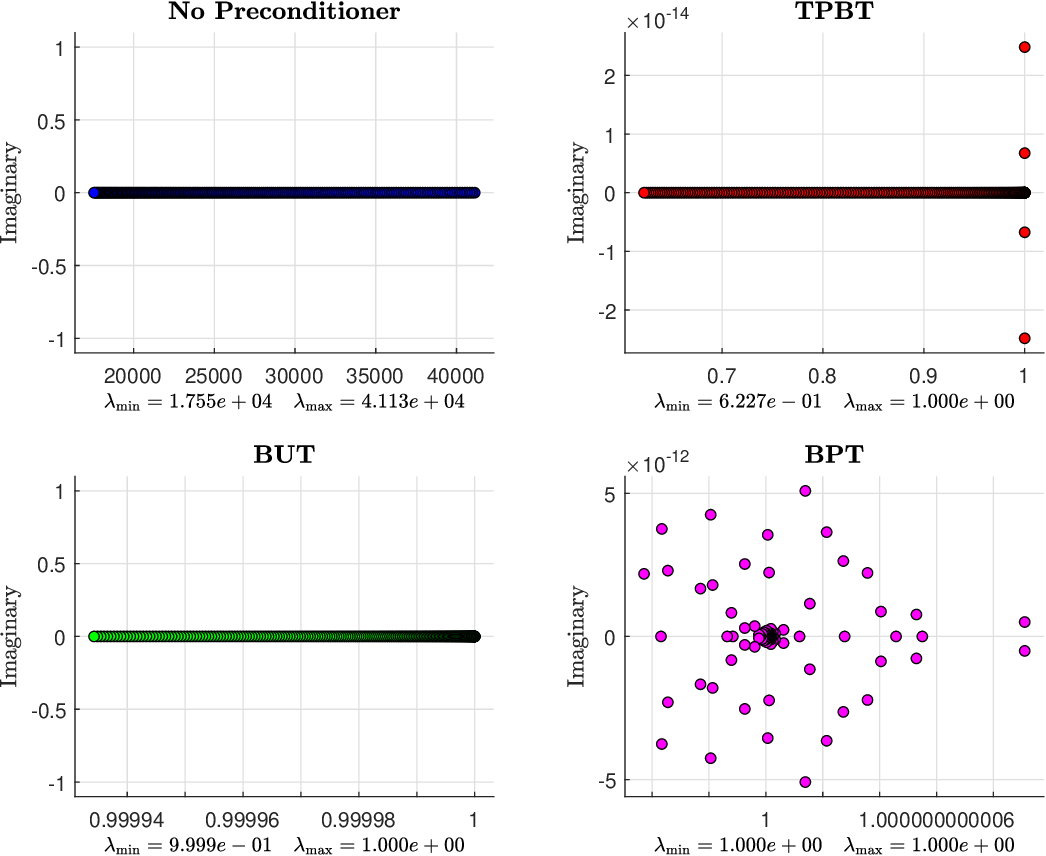}
	\caption{Eigenvalue distribution of the preconditioned matrix for the estimated parameters, for Example~\ref{ex2}.}
	\label{fig3}
\end{figure}

\begin{table}[htbp]
	\centering
	\caption{Comparison of FGMRES performance with different preconditioners for Example~\ref{ex2}.}
	\label{tab.2}
	\setlength{\tabcolsep}{5pt}
	\renewcommand{\arraystretch}{1.1}
	\begin{tabular}{lcccccccccccc}
		\toprule
		\multirow{2}{*}{Size} &
		\multicolumn{4}{c}{TPBT} & 
		\multicolumn{4}{c}{BUT} & 
		\multicolumn{4}{c}{BPT} \\
		\cmidrule(lr){2-5}\cmidrule(lr){6-9}\cmidrule(lr){10-13}
		& IT & CPU & RES & ERR & IT & CPU & RES & ERR & IT & CPU & RES & ERR \\
		\midrule
		
		20475& 5 & 0.14 & 6.7e-11 & 2.3e-04 & 5 & 0.14 & 4.1e-11 & 4.8e-08 & 2 & 0.09 & 8.1e-12 & 1.4e-07 \\
		
		40955 & 5 & 0.19 & 1.9e-11 & 1.6e-04 & 5 & 0.18 & 1.4e-11 & 4.8e-08 & 2 & 0.11 & 7.3e-13 & 3.7e-08 \\
		
		81915 & 5 & 0.31 & 5.8e-11 & 1.1e-04 & 5 & 0.29 & 5.1e-12 & 4.8e-08 & 2 & 0.16 & 8.4e-14 & 1.4e-08 \\
		
		163835 & 5 & 0.50 & 1.9e-12 & 8.1e-05 & 5 & 0.48 & 1.8e-12 & 4.8e-08 & 2 & 0.25 & 6.3e-14 & 7.9e-09 \\
		
		327675 & 4 & 0.84 &4.7e-11 & 9.9e-05 & 4 & 0.75 & 4.8e-11 & 3.0e-06 & 2 & 0.49 & 9.0e-14 & 1.7e-08 \\
		
		655355 & 4 & 1.68 & 1.7e-11 & 7.0e-05 & 4 & 1.49 & 1.7e-11 & 3.0e-06 & 2 & 0.91 & 1.2e-13 & 1.1e-07 \\
		
		1310715 & 4 & 3.15 &6.0e-12 & 5.0e-05 & 4 & 2.87 & 1.3e-11 & 4.5e-06 & 2 & 1.78 & 2.4e-13 & 2.7e-06 \\
		
		2621435 & 3 & 5.08 &7.0e-11 &1.1e-04 & 3 & 4.55 & 1.0e-10 & 2.2e-04 & 2 & 3.50 & 3.1e-13 & 4.7e-06 \\
		\bottomrule
	\end{tabular}
\end{table}

\section{Conclusion}\label{sec6}

In this work, we introduced an efficient preconditioner for solving the double saddle point linear systems that arise in the modeling of liquid crystal director fields, and we analyzed its convergence properties. A detailed spectral analysis of the preconditioned matrix was carried out, including the characterization of eigenvalues, their multiplicities, and their structural properties. Furthermore, we derived estimates for the parameters $\alpha$ and $\beta$ that enable the preconditioner to provide a close approximation to the original coefficient matrix.
The proposed preconditioner was incorporated into a flexible GMRES framework to enhance convergence, and its performance was evaluated against several widely used preconditioners. The numerical experiments demonstrate that the proposed approach consistently outperforms existing methods, both in iteration counts and computational efficiency.

\section*{Conflict of Interest}
The authors declared that they have no conflict of interest.

\section*{Funding statement}
Not applicable.

\section*{Author Contributions} 

All three authors contributed equally in all sections of the paper.

\section*{Acknowledgement}

We would like to thank Alison Ramage  for providing the test problems used in the numerical experiments.

\end{document}